\theoremstyle{plain}
\newtheorem{theorem}{Theorem}[section]
\newtheorem{proposition}[theorem]{Proposition}
\newtheorem{corollary}[theorem]{Corollary}
\newtheorem{lemma}[theorem]{Lemma}
\theoremstyle{definition}
\newtheorem{definition}[theorem]{Definition}
\theoremstyle{remark}
\newtheorem*{example}{Example}
\newtheorem*{remark}{Remark}
\DeclareMathOperator{\Aut}{Aut}
\newcommand{\bg}{\mathcal{B}(G)}
\newcommand{\preline}{\mathcal{L}_U}
\newcommand{\aut}{\mathrm{Aut}(G)}
\newcommand{\up}{U_+}
\newcommand{\upp}{U_{++}}
\newcommand{\um}{U_-}
\newcommand{\umm}{U_{--}}
\title{The scale function and tidy subgroups}
\author[A. Brehm]{Albrecht Brehm}
\author[M. Gheysens]{Maxime Gheysens}
\author[A. Le Boudec]{Adrien Le Boudec}
\author[R. Rollin]{Rafaela Rollin}
\date{November 27, 2015}
\begin{document}

\begin{abstract}
This is an introduction to the structure theory of totally disconnected locally compact groups initiated by Willis in 1994. The two main tools in this theory are the scale function and tidy subgroups, for which we present several properties and examples.

As an illustration of this theory, we give a proof of the fact that the set of periodic elements in a totally disconnected locally compact group is always closed, and that such a group cannot have ergodic automorphisms as soon as it is non-compact.
\end{abstract}

\maketitle

\setcounter{tocdepth}{1}
\tableofcontents

\noindent

\section{Introduction}

Let $G$ be a locally compact totally disconnected group. Recall that this means that $G$ is a group endowed with a group topology that is locally compact, and such that connected components are singletons. Examples of such groups include for instance the automorphism group $\mathrm{Aut}(T_d)$ of a regular tree $T_d$ of degree $d \geq 3$, or the general linear group $\mathrm{GL}_n(\mathbb{Q}_p)$ endowed with its $p$-adic topology. We will denote by $\aut$ the group of bicontinuous automorphisms of the group $G$.

\subsection*{Scale function} Recall that according to van Dantzig's theorem, compact open subgroups of $G$ exist and form a basis of neighbourhoods of the identity in $G$. We will denote by $\bg$ the collection of compact open subgroups of $G$. For $U \in \bg$ and $\alpha \in \aut$, observe that $\alpha(U) \cap U$ is open in $G$, and since $U$ is compact, this implies that $\alpha(U) \cap U$ has finite index in $\alpha(U)$. The \emph{scale function} is defined as  \[s\colon \aut \to \mathbb N, \quad \alpha \mapsto \min_{U\in \bg}|\alpha(U):\alpha(U)\cap U|.\] By identifying an element $g \in G$ with the inner automorphism $x \mapsto gxg^{-1}$, we obtain a function on $G$ \[s\colon G \to \mathbb N, \quad  g \mapsto \min_{U \in \bg}|gUg^{-1}:gUg^{-1}\cap U|.\]

We will call a compact open subgroup $U \in \bg$ \emph{minimising} for $\alpha \in \aut$ if \[s(\alpha)=|\alpha(U):\alpha(U)\cap U|,\] i.e. if the minimum is attained at $U$. 

The scale function was introduced by Willis in \cite{Wil94}, and was subsequently used to answer questions in different fields of mathematics. We refer the reader to the introduction of \cite{Wil15} and to references therein for more details. Here we aim to give an account of this theory, with an emphasis on examples (see Section \ref{sec-ex}).

\subsection*{Tidy subgroups} For $\alpha \in \aut$ and $U \in \bg$, we let \[ \up = \bigcap_{n \geq 0} \alpha^n(U) \, \, \, \text{and} \, \, \, \um = \bigcap_{n \geq 0} \alpha^{-n}(U). \]

Note that by definition \mbox{$U \cap \alpha(\up) = \up$}. We therefore have an injective map \[\chi\colon \alpha(U_+)/U_+ \to \alpha(U)/\alpha(U)\cap U,\] and in particular $|\alpha(\up):\up| \leq |\alpha(U):\alpha(U)\cap U|$. This motivates the following definition.

\begin{definition}
A compact open subgroup $U$ is called \emph{tidy above} for $\alpha$ if $|\alpha(\up):\up| = |\alpha(U):\alpha(U)\cap U|$. 
\end{definition}

We also let \[ \upp = \bigcup_{n \geq 0} \alpha^n(\up) \, \, \, \text{and} \, \, \, \umm = \bigcup_{n \geq 0} \alpha^{-n}(\um). \] Note that these are increasing unions, and it follows that $\upp$ and $\umm$ are subgroups of $G$.

\begin{definition}
A compact open subgroup $U$ is called \emph{tidy below} for $\alpha$ if $U_{++}$ and $U_{--}$ are closed in $G$.
\end{definition}

\begin{definition}
A compact open subgroup is called \emph{tidy} for $\alpha$ if it is both tidy above and tidy below for $\alpha$.
\end{definition}

One of the reasons why this notion of tidy subgroups is relevant in this context is that it precisely describes the properties that minimising subgroups must have in common (see Theorem \ref{thm-mini-tidy}).

\begin{remark}
As mentioned earlier, one can consider an element $g \in G$ as the induced inner automorphism of $G$. Conversely, an automorphism $\alpha$ can be interpreted as conjugation in the group $\mathbb{Z} \ltimes_{\alpha} G$, so that we will freely switch between the two terminologies.
\end{remark}

\section{The structure of minimising subgroups}

\subsection{First properties of the scale function}

In this paragraph we establish some elementary properties of the scale function, using hardly more than its definition. The proofs are mainly taken from \cite{Moller}.

Recall that if $\mu$ is a left-invariant Haar measure on $G$, then for every $g \in G$ there exists a unique positive real number $\Delta(g)$ such that $\mu(Ag^{-1}) = \Delta(g) \mu (A)$ for every Borel subset $A$ of $G$. The function $\Delta$ is called the modular function on $G$, and is a continuous group homomorphism from $G$ to the multiplicative group of positive real numbers. There is an easy way to express $\Delta$ using a compact open subgroup of $G$.

\begin{lemma} \label{lemma-modular}
For all $g \in G$ and for all $U \in \mathcal{B}(G)$, the modular function is given by $$ \Delta(g) = \frac{|U : U \cap g^{-1}Ug|}{|U: U \cap gUg^{-1}|}.$$
\end{lemma}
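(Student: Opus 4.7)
The plan is to compute $\Delta(g)$ directly from the definition of the modular function by applying it to a suitable set, and then interpret the resulting ratio of Haar measures as a ratio of indices.

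First I would unpack the definition: by left-invariance of $\mu$ we have $\mu(gU) = \mu(U)$, so
\[
\mu(gUg^{-1}) = \mu((gU)g^{-1}) = \Delta(g)\mu(gU) = \Delta(g)\mu(U),
\]
which gives $\Delta(g) = \mu(gUg^{-1})/\mu(U)$. Note $U$ has positive finite measure since it is compact open.

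Next I would introduce the compact open subgroup $V := U \cap gUg^{-1}$, which has finite index in both $U$ and in $gUg^{-1}$ (each of these is a finite union of cosets of $V$). Writing
\[
\mu(U) = |U:V|\cdot \mu(V), \qquad \mu(gUg^{-1}) = |gUg^{-1}:V|\cdot \mu(V),
\]
and dividing, I obtain
\[
\Delta(g) = \frac{|gUg^{-1} : U \cap gUg^{-1}|}{|U : U \cap gUg^{-1}|}.
\]

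The final step is cosmetic: the numerator is not yet in the form required by the statement, but conjugation by $g^{-1}$ is a group isomorphism sending $gUg^{-1}$ bijectively to $U$ and sending $U \cap gUg^{-1}$ bijectively to $g^{-1}Ug \cap U$, so it preserves the corresponding index:
\[
|gUg^{-1} : U \cap gUg^{-1}| = |U : U \cap g^{-1}Ug|.
\]
Substituting yields the formula. There is no real obstacle here; the only subtlety is making sure to apply the defining identity $\mu(Ag^{-1}) = \Delta(g)\mu(A)$ to the set $A = gU$ (so that left-invariance absorbs the $g$ on the left), and keeping track of which side carries $g$ and which carries $g^{-1}$ in the final cosmetic conjugation.
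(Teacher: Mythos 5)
Your proof is correct and follows essentially the same computation as the paper: both arguments combine the identity $\mu(gUg^{-1}) = \Delta(g)\mu(U)$, the translation of indices into ratios of Haar measures of the compact open subgroups involved, and invariance of indices under conjugation by $g$. The only difference is presentational (you solve for $\Delta(g)$ first and then split the measure ratio over $V = U \cap gUg^{-1}$, while the paper runs the same identities as one chain of equalities).
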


\begin{proof} The proof is a simple computation: 
\begin{align*} |U:U\cap g^{-1}U g| &= |gUg^{-1} : gUg^{-1} \cap U| \\ 
&= \frac{\mu(gUg^{-1})}{\mu(gUg^{-1} \cap U)}\\ &= \frac{\mu(U) \Delta(g)}{\mu(gUg^{-1} \cap U)}  \\ &= |U : U \cap gUg^{-1}| \cdot \Delta(g). \end{align*}
\end{proof}

Next we will use this representation of the modular function to connect it to the scale.

\begin{proposition} \label{prop-modular}
The relation $\displaystyle{\frac{s(g)}{s(g^{-1})} = \Delta(g)}$ holds for every $g\in G$.
\end{proposition}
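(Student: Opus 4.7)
The plan is to combine the identity of Lemma \ref{lemma-modular} with two well-chosen minimising subgroups, and close the argument using the fact that $\Delta$ is a homomorphism.

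First I would rewrite the scale in a form matching the indices appearing in Lemma \ref{lemma-modular}. Conjugating by $g$ inside the index gives
\[ s(g) = \min_{U \in \bg} |U : U \cap g^{-1}Ug| \quad \text{and} \quad s(g^{-1}) = \min_{U \in \bg} |U : U \cap gUg^{-1}|, \]
so the two quantities on the right-hand side of Lemma \ref{lemma-modular} are, when minimised over $U$, exactly $s(g)$ and $s(g^{-1})$.

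Next, I would prove the two inequalities separately. Pick $U$ minimising for $g$. Then Lemma \ref{lemma-modular} reads
\[ \Delta(g) \cdot |U : U \cap gUg^{-1}| = |U : U \cap g^{-1}Ug| = s(g). \]
Since $|U : U \cap gUg^{-1}| \geq s(g^{-1})$ by the very definition of the scale, this yields $\Delta(g) \leq s(g)/s(g^{-1})$.

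For the reverse inequality, I would apply exactly the same argument with $g$ replaced by $g^{-1}$, obtaining $\Delta(g^{-1}) \leq s(g^{-1})/s(g)$. Since $\Delta$ is a group homomorphism into the multiplicative group of positive reals, $\Delta(g^{-1}) = \Delta(g)^{-1}$, and this inequality rearranges to $\Delta(g) \geq s(g)/s(g^{-1})$. Combining the two bounds gives the claimed equality. There is no real obstacle here: the only subtlety is keeping track of which side of the index carries the conjugation, which is handled by the preliminary rewriting of $s(g)$ and $s(g^{-1})$.
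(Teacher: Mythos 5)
Your proof is correct and is essentially the paper's argument: both rest on Lemma~\ref{lemma-modular} evaluated at a subgroup minimising for $g$ and one minimising for $g^{-1}$, together with the minimality defining the scale. The only cosmetic difference is that you obtain the reverse inequality by applying the argument to $g^{-1}$ and using $\Delta(g^{-1}) = \Delta(g)^{-1}$, whereas the paper reads off $\Delta(g)$ directly from the lemma at the second minimising subgroup and sandwiches $s(g)/s(g^{-1})$ in a single chain.
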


\begin{proof}
Let $g \in G$ and $U_1, U_2 \in \mathcal{B}(G)$ be such that $s(g) = |U_1 : U_1 \cap g^{-1}U_1g|$ and $s(g^{-1})=|U_2 : U_2 \cap gU_2g^{-1}|$. Since $U_1$ and $U_2$ minimise these indices, we have $$\frac{s(g)}{s(g^{-1})} \geq \frac{|U_1 : U_1 \cap g^{-1}U_1g |}{|U_1 : U_1 \cap g U_1 g^{-1}|} = \Delta (g) = \frac{|U_2 : U_2 \cap g^{-1} U_2 g|}{|U_2 : U_2 \cap gU_2g^{-1}|} \geq \frac{s(g)}{s(g^{-1})},$$ using Lemma~\ref{lemma-modular} to put the modular function in the middle of the inequality.
\end{proof}

Furthermore the proof of Proposition~\ref{prop-modular} shows (keeping notation) that $U_1$ and $U_2$ are minimising for \emph{both} $g$ and $g^{-1}$. So the next corollary follows directly.

\begin{corollary} \label{sameminimisingsubgroups}
For $g \in G$ and $U \in \mathcal{B}(G)$ we have $s(g) = |gUg^{-1}:gUg^{-1}\cap U|$ if and only if $s(g^{-1}) = |g^{-1}Ug:g^{-1}Ug\cap U|$.
\end{corollary}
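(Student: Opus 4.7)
The plan is simple: combine Lemma~\ref{lemma-modular} and Proposition~\ref{prop-modular} to force minimality for $g^{-1}$ out of minimality for $g$. The observation that makes everything fit together is that the two indices $|gUg^{-1} : gUg^{-1} \cap U|$ and $|U : U \cap g^{-1}Ug|$ coincide, simply by conjugating numerator and denominator by $g^{-1}$. So ``$U$ is minimising for $g$'' translates to $s(g) = |U : U \cap g^{-1}Ug|$, which is exactly the numerator appearing in Lemma~\ref{lemma-modular}.

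Given this, the argument runs as follows. Suppose $U$ minimises for $g$. Lemma~\ref{lemma-modular} gives
\[s(g) = |U : U \cap g^{-1}Ug| = \Delta(g) \cdot |U : U \cap gUg^{-1}|.\]
Substituting $\Delta(g) = s(g)/s(g^{-1})$ from Proposition~\ref{prop-modular} and cancelling $s(g)$ yields $s(g^{-1}) = |U : U \cap gUg^{-1}|$. Conjugating back by $g$, this is the same as $|g^{-1}Ug : g^{-1}Ug \cap U|$, so $U$ is minimising for $g^{-1}$ as well. The converse follows by swapping the roles of $g$ and $g^{-1}$, or equivalently by applying what has just been proved to the element $g^{-1}$.

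I do not anticipate any real obstacle: the corollary is essentially extracted from the equality chain in the proof of Proposition~\ref{prop-modular} by noticing that since the outer terms coincide, both inequalities must in fact be equalities. The only bookkeeping to keep straight is the passage between the two equivalent forms $|gUg^{-1} : gUg^{-1} \cap U|$ and $|U : U \cap g^{-1}Ug|$ of the scale-defining index, used respectively in the definition of $s$ and in Lemma~\ref{lemma-modular}.
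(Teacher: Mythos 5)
Your proof is correct and rests on exactly the same ingredients as the paper's: the index identity of Lemma~\ref{lemma-modular} and the relation $\Delta(g)=s(g)/s(g^{-1})$ of Proposition~\ref{prop-modular}, the paper merely phrasing this as the collapse of the inequality chain in the proof of that proposition into equalities, while you make the same computation explicit by substitution and cancellation. No gap; this matches the intended argument.
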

 
The property that $g$ and $g^{-1}$ have the same minimising subgroups can be used to prove the following property of the scale function.

\begin{corollary} \label{normalise}
An element $g \in G$ normalises some $U \in \mathcal{B}(G)$ if and only if $s(g) = 1 = s(g^{-1})$.
\end{corollary}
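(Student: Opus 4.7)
The plan is to prove both implications directly from the definitions, with the nontrivial direction hinging on the symmetric minimising property recorded in Corollary~\ref{sameminimisingsubgroups}.

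For the easy direction, suppose $g$ normalises some $U \in \mathcal{B}(G)$, meaning $gUg^{-1} = U$. Then $gUg^{-1} \cap U = gUg^{-1}$, so the index $|gUg^{-1} : gUg^{-1} \cap U|$ equals $1$. Since the scale is a minimum of positive integers, $s(g) = 1$. Applying the same reasoning to $g^{-1}$ (which also normalises $U$) yields $s(g^{-1}) = 1$.

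For the converse, assume $s(g) = 1 = s(g^{-1})$ and pick any $U \in \mathcal{B}(G)$ that is minimising for $g$; such a $U$ exists by the definition of the scale as a minimum. Then by Corollary~\ref{sameminimisingsubgroups}, the same $U$ is minimising for $g^{-1}$. Unpacking the two minimising conditions, the equalities $|gUg^{-1} : gUg^{-1} \cap U| = 1$ and $|g^{-1}Ug : g^{-1}Ug \cap U| = 1$ force $gUg^{-1} \subseteq U$ and $g^{-1}Ug \subseteq U$ respectively. The second inclusion is equivalent to $U \subseteq gUg^{-1}$, so combining the two gives $gUg^{-1} = U$, i.e., $g$ normalises $U$.

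There is no real obstacle here: the only subtlety is that choosing $U$ minimising for $g$ alone would only yield $gUg^{-1} \subseteq U$, which does not by itself give normalisation (in fact, for an automorphism $\alpha$ of infinite order one can have $\alpha(U) \subsetneq U$ strictly, with $|\alpha(U):U| < 1$ impossible but $|U:\alpha(U)|$ arbitrarily large). The essential input is therefore Corollary~\ref{sameminimisingsubgroups}, which guarantees that the minimum is attained simultaneously at the same $U$ for both $g$ and $g^{-1}$, thereby giving the two opposite inclusions needed for equality.
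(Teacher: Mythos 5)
Your proof is correct and follows essentially the same route as the paper: the forward direction is immediate from the definition, and the converse uses Corollary~\ref{sameminimisingsubgroups} to get a single minimising $U$ for both $g$ and $g^{-1}$, whose two index-one conditions yield the opposite inclusions $gUg^{-1} \subseteq U$ and $U \subseteq gUg^{-1}$. The paper phrases the indices in the conjugated form $|U : U \cap g^{\pm 1}Ug^{\mp 1}|$, but this is only a cosmetic difference.
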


\begin{proof}
The \enquote{only if} direction holds due to the definition of the scale function. For the \enquote{if} direction we use Corollary~\ref{sameminimisingsubgroups} to see that there exists $U \in \mathcal{B}(G)$ such that $|U : U \cap g^{-1}Ug| = 1= |U : U \cap gUg^{-1}|$. So $g^{-1}Ug$ and $gUg^{-1}$ both contain $U$ and hence are equal to U.
\end{proof}

\begin{example}
Let $G = \mathrm{Aut}(T_d)$ be the automorphism group of a regular tree $T_d$ of degree $d \geq 3$. Recall that an element $\varphi \in \mathrm{Aut}(T_d)$ either stabilises a point or an edge, in which case it is called elliptic, or there exists a unique axis along which $\varphi$ is a translation, and we say that $\varphi$ is hyperbolic. One easily see that $\varphi$ normalises a compact open subgroup of $\mathrm{Aut}(T_d)$ if and only if $\varphi$ is elliptic, in which case we have $s(\varphi) = s(\varphi^{-1}) = 1$. The scale function of a hyperbolic element will be computed in Section \ref{sec-ex}.
\end{example}

\subsection{Tidiness witnesses the scale}

Given $g \in G$, the properties shared by all the compact open subgroups at which the minimum is attained in the definition of the scale of $g$, are exactly the tidiness criteria.

\begin{theorem} \cite[Theorem~3.1]{Wil01} \label{thm-mini-tidy}
Let $U$ be a compact open subgroup of $G$ and $g \in G$. Then $U$ is minimising for $g$ if and only if $U$ is tidy for $g$.
\end{theorem}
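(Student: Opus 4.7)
The plan is to prove the two directions separately, decomposing tidiness into its above and below constituents. I expect the implication \emph{minimising $\Rightarrow$ tidy below} to be the main obstacle, since the closedness of $\upp$ and $\umm$ is a global condition with no immediate link to the index minimisation that defines $s(\alpha)$.

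For \emph{minimising $\Rightarrow$ tidy above}, I would introduce the descending sequence of compact open subgroups $V_n = \bigcap_{k=0}^n \alpha^k(U)$. A direct computation shows $\alpha(V_n) \cap V_n = V_{n+1}$, and the inclusion $\alpha(V_n) \subseteq \alpha(U)$ together with $\alpha(V_n) \cap U = V_{n+1}$ induces an injection $\alpha(V_n)/V_{n+1} \hookrightarrow \alpha(U)/(\alpha(U) \cap U)$. Hence $|\alpha(V_n):V_{n+1}| \leq s(\alpha)$. Since each $V_n \in \bg$, minimality of $U$ forces equality for every $n$. A compactness argument---lifting coset representatives iteratively through the decreasing chain $V_n \searrow \up$ and extracting a convergent subsequence---then transfers this equality to the limit, giving $|\alpha(\up) : \up| = s(\alpha)$.

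For \emph{minimising $\Rightarrow$ tidy below}, I would argue by contradiction: if $\upp$ is not closed, its closure $\overline{\upp}$ is a strictly larger $\alpha$-invariant subgroup, and the plan is to use a suitable compact piece of $\overline{\upp} \setminus \upp$ to build a compact open enlargement $V \supsetneq U$ with $|\alpha(V):\alpha(V) \cap V| < |\alpha(U):\alpha(U) \cap U|$, contradicting minimality. The case of $\umm$ is handled symmetrically, or by applying the argument to $\alpha^{-1}$ via Corollary~\ref{sameminimisingsubgroups}. The technical content lies in producing the explicit enlargement $V$ and in verifying that the index strictly decreases; this is the heart of the theorem and is where Willis's machinery really enters.

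For the converse \emph{tidy $\Rightarrow$ minimising}, given an arbitrary $V \in \bg$ one needs $|\alpha(V):\alpha(V) \cap V| \geq |\alpha(U):\alpha(U) \cap U|$. Using that $U$ is tidy above, the right-hand side equals $|\alpha(\up):\up|$, which I would identify as an intrinsic invariant of $\alpha$ (independent of the specific tidy $U$). The plan is then to show that $|\alpha(V):\alpha(V) \cap V|$ dominates this invariant for any $V$, with the tidy below hypothesis ensuring that the increasing unions $\upp$ and $\umm$ carry enough of the dynamics of $\alpha$ for the comparison to succeed.
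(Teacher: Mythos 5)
Your plan for the implication from minimising to tidy above is correct and is the standard argument: for $V_n=\bigcap_{k=0}^n\alpha^k(U)$ one has $\alpha(V_n)\cap V_n=V_{n+1}$ and an injection $\alpha(V_n)/V_{n+1}\hookrightarrow\alpha(U)/(\alpha(U)\cap U)$, so minimality squeezes $|\alpha(V_n):V_{n+1}|=s(\alpha)$ for all $n$, and the limiting compactness argument (the one cited in Proposition \ref{tachar}, cf.\ \cite[Lemma~1]{Wil94}, \cite[Proposition~5.5]{Wes}) passes this to $|\alpha(\up):\up|$. Beyond that point, however, your proposal has genuine gaps rather than proofs. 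For minimising $\Rightarrow$ tidy below you only state the intention to turn non-closedness of $\upp$ into a compact open subgroup of strictly smaller index, and you yourself flag the construction as ``the heart of the theorem''; nothing is supplied to fill it. Moreover the subgroup produced in Willis's actual argument is not an enlargement $V\supsetneq U$: the mechanism goes through the relatively compact subgroup $\mathcal{L}_U$ of Lemma \ref{lemma-L-rel-cpt}, the criterion $L_U\leq U$ of Proposition \ref{chartidy}, and a modified subgroup built as in Proposition \ref{existtidy}, whose index one must compare with that of $U$. So the contradiction-with-minimality scheme is the right spirit, but the missing construction and the strict index inequality are exactly the technical content (which this paper also does not reproduce, deferring to \cite[Theorem~3.1]{Wil01} and \cite{Wes}).

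For the converse, tidy $\Rightarrow$ minimising, your plan rests on two unproved assertions: that $|\alpha(\up):\up|$ is the same for every tidy subgroup, and that $|\alpha(V):\alpha(V)\cap V|$ dominates this common value for an arbitrary $V\in\bg$. The second assertion is essentially Willis's original proof that $s(\alpha)=|\alpha(\up):\up|$ and is itself a substantial argument, not a routine verification; the phrase about $\upp$ and $\umm$ ``carrying enough of the dynamics'' does not yet identify how the comparison is made. Here the paper takes a cheaper route that you miss: a minimising subgroup exists trivially (the minimum over positive integers is attained), it is tidy by the forward implication, and the constancy of the index over tidy subgroups (\cite[Lemma~5.20]{Wes}, which is your first assertion) then forces every tidy subgroup to realise the same, hence minimal, index. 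With that observation your harder second assertion is never needed. In sum, your roadmap follows Willis's original direct strategy and is sound where it is explicit, but both the tidy-below step and the converse are left without the arguments that carry the weight of the theorem.
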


\begin{proof}[On the proof]
We first assume that the forward implication is proved, and explain how to deduce the converse implication. The main argument is \cite[Lemma~5.20]{Wes}, which says that the quantity $|V : V \cap g^{-1}Vg|$ does not depend on the choice of $V$ as soon as $V$ is tidy for $\alpha$. Now choose a minimising subgroup $U$, and a tidy subgroup $V$. According to the forward implication of the statement the subgroup $U$ is also tidy, and by the previous lemma one has $|V : V \cap g^{-1}Vg| = |U : U \cap g^{-1}Ug| = s(g)$. Therefore $V$ is minimising as well, which proves the claim. 

The forward direction requires more work. This can be found in \cite[Theorem~3.1]{Wil01} or \cite[p.~39]{Wes}.
\end{proof}

\subsection{Subgroups being tidy above}

In this paragraph we give several characterisations of the notion of being tidy above, and explain how to construct subgroups tidy above for a given automorphism.

\begin{proposition}\thlabel{tachar}
For every $\alpha \in \aut$ and every $U \in \bg$, the following statements are equivalent:
\begin{enumerate}
\item $U=U_+U_-$, \label{pm}
\item $U$ is tidy above for $\alpha$,\label{ta}
\item $U=U_+\big(U\cap\alpha^{-1}(U)\big)$.\label{almostpm}
\end{enumerate}
\end{proposition}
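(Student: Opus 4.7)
The plan is to treat the equivalences in the order $(2)\Leftrightarrow(3)$, $(1)\Rightarrow(3)$, and $(3)\Rightarrow(1)$, the last being by far the main content. For $(2)\Leftrightarrow(3)$, the point is that the injective map $\chi$ discussed just before the definition of tidy above is induced by the inclusion $\alpha(U_+)\subseteq\alpha(U)$; being tidy above amounts to $\chi$ being surjective, i.e.\ to the equality $\alpha(U)=\alpha(U_+)\bigl(\alpha(U)\cap U\bigr)$, whose image under $\alpha^{-1}$ is precisely $(3)$. The implication $(1)\Rightarrow(3)$ is immediate since $U_-\subseteq U\cap\alpha^{-1}(U)$, which forces $U=U_+U_-\subseteq U_+\bigl(U\cap\alpha^{-1}(U)\bigr)\subseteq U$.

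For the substantive implication $(3)\Rightarrow(1)$, I set $W_n = U\cap\alpha^{-1}(U)\cap\cdots\cap\alpha^{-n}(U)$, a decreasing sequence of compact open subgroups with $\bigcap_n W_n = U_-$, and plan to prove by induction that $U = U_+W_n$ for every $n\geq 1$. The base case $n=1$ is exactly $(3)$. For the inductive step, given $u=ab$ with $a\in U_+$ and $b\in W_n$, I would apply $(3)$ to $\alpha^n(b)\in U$, writing $\alpha^n(b)=cd$ with $c\in U_+$ and $d\in U\cap\alpha^{-1}(U)$, and rewriting
\[
u \;=\; \bigl(a\,\alpha^{-n}(c)\bigr)\cdot\alpha^{-n}(d).
\]
The factor $a\,\alpha^{-n}(c)$ lies in $U_+$, thanks to the easy observation $\alpha^{-n}(U_+)\subseteq U_+$, which is immediate from the definition of $U_+$ as a nested intersection. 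The delicate step — the main obstacle of the proof — is to verify that $\alpha^{-n}(d)$ belongs to $W_{n+1}$: combining $d=c^{-1}\alpha^n(b)$ with the facts $U_+\subseteq\alpha^j(U)$ for $j\geq 0$ and $\alpha^j(b)\in U$ for $0\leq j\leq n$ (the latter being just $b\in W_n$), one deduces $d\in\alpha^j(U)$ for $0\leq j\leq n$; together with $d\in\alpha^{-1}(U)$ this yields $\alpha^{-n}(d)\in W_{n+1}$.

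Finally, to pass from $U=U_+W_n$ (for every $n$) to $U=U_+U_-$, I would invoke compactness. Given $u\in U$, choose decompositions $u=a_nb_n$ with $a_n\in U_+$ and $b_n\in W_n$. Since $U_+$ is compact, some subsequence $a_{n_k}$ converges to an element $a\in U_+$, and then $b_{n_k}=a_{n_k}^{-1}u$ converges to $b:=a^{-1}u$. Each $W_m$ being closed and containing $b_{n_k}$ as soon as $n_k\geq m$, the limit $b$ lies in $\bigcap_m W_m = U_-$, so that $u = ab\in U_+U_-$, which completes the proof.
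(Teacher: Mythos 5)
Your proof is correct and follows essentially the same route as the paper: the easy implications go through the surjectivity of the map $\chi$ (your $(2)\Leftrightarrow(3)$ repackages the paper's $(1)\Rightarrow(2)$ and $(2)\Rightarrow(3)$ steps, and your $(1)\Rightarrow(3)$ is the immediate inclusion), and the substantive implication $(3)\Rightarrow(1)$ is reduced, exactly as in the paper, to the identity $U=U_+\bigcap_{i=0}^{n}\alpha^{-i}(U)$ for all $n$, followed by a compactness argument. Your induction is an element-chasing version of the paper's set-theoretic identity $L_1(M_1\cap N_1)\cap L_2(M_2\cap N_2)=L_1(M_1\cap M_2\cap N_2)$, and is arguably more transparent; the key verification that $\alpha^{-n}(d)\in W_{n+1}$ (using $c\in U_+\subseteq\alpha^{j}(U)$ for $j\geq 0$ and $\alpha^{j}(b)\in U$ for $0\leq j\leq n$) is correct, as is the observation $\alpha^{-n}(U_+)\subseteq U_+$.

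The one caveat is the final compactness step: you extract a convergent \emph{subsequence} from $(a_n)$ in $U_+$, but $G$ is not assumed metrisable or second countable, so compactness of $U_+$ does not give sequential compactness. This is easily repaired: either pass to a convergent subnet (the same limit argument works, since for each fixed $m$ the subnet eventually has index at least $m$, so the limit of the $b$'s lies in the closed set $W_m$), or avoid limits altogether by noting that for fixed $u\in U$ the sets $C_m=\{(x,y)\in U_+\times W_1 : y\in W_m,\ xy=u\}$ are nonempty, closed and nested in the compact space $U_+\times W_1$, hence have nonempty intersection, which yields $u\in U_+U_-$. This is precisely the \enquote{compactness argument} that the paper delegates to the references, so with this small adjustment your argument is complete.
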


\begin{proof}
As observed earlier, the map \[\chi\colon\alpha(U_+)/U_+\to\alpha(U)/\alpha(U)\cap U, \quad xU_+\mapsto x(\alpha(U)\cap U)\] is well-defined and injective. To see that (\ref{pm}) implies (\ref{ta}) we only have to show surjectivity of $\chi$. This follows immediately from \[\alpha (U)(\alpha (U)\cap U)=\alpha (U_+)\alpha (U_-)(\alpha (U)\cap U)\subseteq \alpha (U_+)(\alpha (U)\cap U)\]
and the disjointness of the cosets.\\
Now we show that (\ref{ta}) forces (\ref{almostpm}). Since $\chi$ is injective and by assumption the domain and the codomain have the same finite cardinality, $\chi$ has to be surjective. But this yields already \[\alpha (U)\subseteq \alpha (U)(\alpha (U)\cap U)=\alpha (U_+)(\alpha (U)\cap U),\] and by applying $\alpha^{-1}$ on both sides we get the claim.\\
In order to prove that (\ref{almostpm}) implies (\ref{pm}) we calculate by using the hypothesis \[\alpha^{-1}(U)\cap U=\alpha ^{-1}\big(U_+(\alpha^{-1}(U)\cap U)\big)\cap U_+(\alpha^{-1}(U)\cap U).\]
The last term is a set of the form \[L_1(M_1\cap N_1)\cap L_2(M_2\cap N_2),\quad L_i\subseteq N_i,\ M_2=N_1,\ L_1\subseteq L_2.\]
One can easily check that this set can be written as $L_1(M_1\cap M_2\cap N_2)$.
Using this we obtain the following expression for $U$:
\[U=U_+(\alpha^{-1}(U)\cap U)=U_+\alpha^{-1}(U_+)\underbrace{(\alpha^{-2}(U)\cap\alpha^{-1}(U)\cap U)}_{\alpha^{-1}(\alpha^{-1}(U)\cap U)\cap(\alpha^{-1}(U)\cap U)}\] and by induction $U=U_+\bigcap_{n=0}^k\alpha^{-n}(U)$ for every $k \geq 1$.

The conclusion then follows from a compactness argument, expressing the idea that $\bigcap_{n=0}^k\alpha^{-n}(U)$ tends to $U_-$ when $k$ goes to infinity (see \cite[Lemma~1]{Wil94} or \cite[Proposition~5.5]{Wes}).
\end{proof}

The proof of the following proposition yields an explicit procedure to exhibit subgroups that are tidy above for a given automorphism.

\begin{proposition}[Existence of subgroups being tidy above] \label{prop-exist-tdab}
For every $\alpha \in \Aut(G)$, there is a compact open subgroup that is tidy above for $\alpha$.
\end{proposition}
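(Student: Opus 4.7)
My plan is to start from an arbitrary compact open subgroup $V \in \bg$ (supplied by van Dantzig's theorem) and iteratively shrink it by intersecting with forward $\alpha$-translates, until a certain index sequence stabilises. Concretely, for each $n \geq 0$ I set $L_n := \bigcap_{k=0}^n \alpha^k(V)$, a decreasing family of compact open subgroups. A direct computation gives $\alpha(L_n) \cap L_n = L_{n+1}$ and $(L_n)_+ = V_+$ for every $n$. Writing $i_n := |\alpha(L_n) : L_{n+1}|$, the injectivity of the map $\chi$ recalled before the definition of tidy above yields $i_n \geq |\alpha(V_+) : V_+|$, with equality precisely when $L_n$ is tidy above for $\alpha$.

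The first key step is to show that $(i_n)$ is non-increasing. I would exhibit the natural map
\[
\alpha(L_{n+1})/L_{n+2} \to \alpha(L_n)/L_{n+1}, \qquad xL_{n+2} \mapsto xL_{n+1},
\]
which is well-defined because $L_{n+2} \subseteq L_{n+1}$ and $\alpha(L_{n+1}) \subseteq \alpha(L_n)$, and injective because if $x,x' \in \alpha(L_{n+1})$ satisfy $x'^{-1}x \in L_{n+1}$, then $x'^{-1}x \in \alpha(L_{n+1}) \cap L_{n+1} = L_{n+2}$. Hence $(i_n)$ is a non-increasing sequence of positive integers, so it stabilises at some value $i \geq |\alpha(V_+) : V_+|$ from some index $n_0$ onwards.

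The second key step is to upgrade stabilisation to equality by a compactness argument, thereby showing that $L_{n_0}$ is tidy above. Given a coset $xL_{n_0+1}$ of $L_{n_0+1}$ in $\alpha(L_{n_0})$, stability of $(i_n)$ implies that for each $m \geq n_0$ the composed injection $\alpha(L_m)/L_{m+1} \to \alpha(L_{n_0})/L_{n_0+1}$ is a bijection, so one obtains a representative $y_m \in \alpha(L_m)$ of that coset. The sequence $(y_m)$ lies in the compact set $\alpha(L_{n_0})$; extracting a convergent subsequence, a limit point $y^{\ast}$ belongs to $\bigcap_{m \geq n_0} \alpha(L_m) = \alpha(V_+)$ (since each $\alpha(L_m)$ is closed and the family nested), while openness of $L_{n_0+1}$ gives $y^{\ast} L_{n_0+1} = x L_{n_0+1}$. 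So $xL_{n_0+1}$ lies in the image of $\chi$, which proves that $L_{n_0}$ is tidy above.

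The main obstacle is this last compactness step: monotonicity alone only yields the a priori bound $i \geq |\alpha(V_+) : V_+|$, and forcing the reverse inequality genuinely uses the topology of $G$ via the extraction of a subsequential limit that must moreover lie in the deeper intersection $\bigcap_m \alpha(L_m)$, not merely in the ambient compact subgroup $\alpha(L_{n_0})$.
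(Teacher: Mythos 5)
Your argument is correct and is essentially the paper's own proof: the same decreasing chain $L_n=\bigcap_{k=0}^{n}\alpha^{k}(V)$, the same stabilisation of the indices $i_n$ via injections between the quotients, and the same final compactness step upgrading stabilisation to surjectivity of $\chi$ (the paper packages this as the identity $\alpha(V)=\alpha(V_+)\bigl(V\cap\alpha(V)\bigr)$ and its tidy-above criterion, delegating the compactness argument to a cited lemma, while you spell it out coset by coset). One small repair: a compact subset of a general totally disconnected locally compact group need not be sequentially compact, so instead of extracting a convergent subsequence of $(y_m)$ take a cluster point, or equivalently observe that the nested non-empty compact sets $xL_{n_0+1}\cap\alpha(L_m)$, $m\geq n_0$, have non-empty intersection contained in $xL_{n_0+1}\cap\alpha(V_+)$; the rest of your argument goes through unchanged.
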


\begin{proof}
The idea is to start from an arbitrary compact open subgroup of $G$ and to consider $U_n =\bigcap_{i=0}^n\alpha^i(U)$ for $n \geq 0$. By construction there is an embedding \[\chi_n\colon\alpha (U_n)/\alpha (U_n)\cap U_n \hookrightarrow \alpha (U_{n-1})/\alpha (U_{n-1})\cap U_{n-1}.\] 
Hence these quotients form a chain with decreasing cardinalities. The compact open property of $U_0=U$ guarantees that $U_0/U_1$ is finite and therefore the chain has to become stationary. So there exists $N\in\mathbb N$ such that \[ |\alpha( U_j)/U_{j+1}|=|\alpha (U_N)/U_{N+1}| \] for every $j\geq N$. We will show that $V=U_N$ is tidy above. By the choice of $N$ the inclusions $\chi_{j,N}\colon\alpha( U_j)/\alpha (U_j)\cap U_j \hookrightarrow \alpha (U_{N})/\alpha (U_{N})\cap U_{N}$ are even bijections for all $j\geq N$. So we obtain by surjectivity of the map $\chi_{j,N}$ \[\alpha( V)\subseteq \alpha (U_N) U_{N+1}=\alpha(U_j)U_{N+1}=\alpha (U_j)(U_N\cap \alpha^{N+1}( U))=\alpha (U_j)(V\cap\alpha (V)).\]
A compactness argument similar to the one of the end of the proof of Proposition \ref{tachar}  yields $\alpha (V)=\alpha (V_+)(V\cap \alpha (V))$. Applying $\alpha^{-1}$ on both sides, we obtain that $V$ is tidy above for $\alpha$, which was the claim.
\end{proof}

An element $g \in G$ is called \textit{periodic} if $\overline{\langle g \rangle}$ is compact. We denote the set of all periodic elements of $G$ by $P(G)$. For example the reader can check as an exercise that $\varphi \in \mathrm{Aut}(T_d)$ is periodic if and only if $\varphi$ is elliptic.

The following theorem was proved by Willis using tidy subgroups and properties of the scale function such as continuity or Corollary \ref{normalise} (see \cite[Theorem~2]{Wil95}). Following \cite[Part~5]{Wes}, we present here a proof using only tidiness above.

\begin{theorem} \label{periodictheorem}
The set $P(G)$ is closed in $G$.
\end{theorem}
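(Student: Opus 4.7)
The plan is to show the complement of $P(G)$ is open, using first a general characterization of periodicity and then the tidy-above structure at an arbitrary point. The characterization to prove is: \emph{$g \in P(G)$ if and only if $g$ belongs to some compact open subgroup of $G$}. The ``if'' direction is immediate, since any $U \in \bg$ contains $\overline{\langle g \rangle}$ as soon as it contains $g$. For the converse, given $g$ periodic, $H := \overline{\langle g \rangle}$ is a compact subgroup; starting from any $V_0 \in \bg$, the subset $V := \bigcap_{h \in H} h V_0 h^{-1}$ is a compact subgroup of $G$, and it is open by a tube-lemma argument applied to the continuous conjugation map $H \times G \to G$. Since $V$ is $H$-invariant by construction, $HV$ is a compact open subgroup of $G$ containing $g$. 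In particular $P(G) = \bigcup_{U \in \bg} U$ is open.

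For closedness, I would take $g \in \overline{P(G)}$ and a net $(g_\lambda)$ in $P(G)$ with $g_\lambda \to g$, and produce a compact open subgroup of $G$ containing $g$. Fix $U$ tidy above for $g$ by Proposition~\ref{prop-exist-tdab}, so that $U = U_+ U_-$. Eventually $g_\lambda = gu_\lambda$ with $u_\lambda \in U$ and $u_\lambda \to 1$.

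The technical heart is a transfer of periodicity via the telescoping identity
\[
(gu)^n \;=\; g^n\cdot (g^{-(n-1)}ug^{n-1})(g^{-(n-2)}ug^{n-2})\cdots(g^{-1}ug)\cdot u,
\]
which is easily proved by induction. When $u \in U_-$, each factor $g^{-k}ug^k$ (for $0 \leq k \leq n-1$) lies in $U$, so $(gu)^n \in g^n U$. Consequently, if $gu$ is periodic, then $\overline{\langle gu\rangle}\cdot U$ is a compact set containing $\{g^n : n \geq 0\}$, making this forward orbit relatively compact. A standard argument (extract a subsequence with $g^{n_{i+1}-n_i} \to 1$ to see that $g^{-1}$ lies in the closure) then yields $g \in P(G)$.

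The main obstacle is that $u_\lambda$ need not lie in $U_-$, as $U_-$ is not open in general. The way around uses the tidy-above factorization $U = U_+ U_-$ to split $u_\lambda = a_\lambda b_\lambda$ with $a_\lambda \in U_+$ and $b_\lambda \in U_-$, together with the symmetric telescoping identity for $u \in U_+$ (obtained by applying the one above to $g^{-1}$), which controls the backward orbit. Combining the two handles the $a_\lambda$ and $b_\lambda$ parts of $u_\lambda$; the delicate point, and the hardest step of the proof, is verifying that both telescopings interact compatibly so that $\{g^n : n \geq 0\}$ and $\{g^{-n} : n \geq 0\}$ are both relatively compact, yielding $g \in P(G)$.
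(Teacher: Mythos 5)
Your overall skeleton (pass to $g_\lambda = g u_\lambda$ with $u_\lambda \in U$ for $U$ tidy above, transfer periodicity from $g_\lambda$ to $g$ via a coset computation, and finish with the standard fact that relative compactness of $\{g^n : n \geq 0\}$ forces $g \in P(G)$) is the right one, and your opening characterisation $P(G) = \bigcup_{U \in \bg} U$ is correct, though not actually needed. But there is a genuine gap exactly where you say the proof is hardest: the ``interaction of the two telescopings'' is the whole content of the theorem, and it is left unverified. Worse, the specific plan you sketch does not work as stated. Writing $u_\lambda = a_\lambda b_\lambda$ with one factor in $U_+$ and one in $U_-$ does not let you telescope the two factors separately, because they are interleaved inside the single element $(g a_\lambda b_\lambda)^n$: expanding this product produces conjugates $g^{-k} b_\lambda g^{k}$ with $k \geq 0$ (in the paper's convention, where $U_\pm = \bigcap_{n \geq 0} g^{\pm n} U g^{\mp n}$), and membership in $U_-$ gives you control of exactly the opposite conjugates $g^{k} b g^{-k}$, $k \geq 0$. (Relatedly, your one-sided telescoping lemma has the roles of $U_+$ and $U_-$ swapped relative to the paper's definitions; with those definitions it is $u \in U_+$ that gives $(gu)^n \in g^n U$. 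Also note that once one forward orbit is relatively compact you are done --- you never need both $\{g^n\}$ and $\{g^{-n}\}$.)

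The missing ingredient is the two-sided version of your telescoping, which is precisely where tidiness above enters: for $U$ tidy above for $g$ one proves by induction that $(UgU)^n = U g^n U$, the induction step being $U g^n U g U = U g^n (U_- U_+) g U = (U g^n U_-)(U_+ g U) = U g^{n+1} U$, using $g^n U_- g^{-n} \subseteq U$ and $g^{-1} U_+ g \subseteq U$. This identity is exactly how the $U_+$- and $U_-$-parts of $u_\lambda$ ``interact compatibly'': it gives $(g u_\lambda)^n \in U g^n U$, hence $g^n \in U \, \overline{\langle g_\lambda \rangle} \, U$ for all $n \geq 1$, so the forward orbit of $g$ is relatively compact and your standard semigroup argument concludes. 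This is also how the paper proceeds (its Lemma~\ref{powerprop}), except that instead of the relative-compactness fact it extracts an integer $n$ with $g_\lambda^n \in U$, deduces $g^n \in U$, and uses that $\overline{\langle g^n \rangle}$ has finite index in $\overline{\langle g \rangle}$. Until you prove the identity $(UgU)^n = U g^n U$ (or an equivalent), your argument only covers the special case $u_\lambda \in U_+$ and does not establish the theorem.
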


We will need the following easy lemma.

\begin{lemma} \label{powerprop}
If $U$ is tidy above for $g \in G$, then $(UgU)^n = Ug^nU$ and $(Ug^{-1}U)^n$ $=Ug^{-n}U$ for all $n \geq 1$.
\end{lemma}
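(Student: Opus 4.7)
The plan is to exploit the characterization of tidy above obtained in \thref{tachar}, which asserts that $U=\up\um$; inverting this equality also gives $U=\um\up$. The second ingredient is a pair of intertwining relations that follow directly from the definitions of $\up$ and $\um$: since $\up\subseteq g\up g^{-1}$ and $g\um g^{-1}\subseteq\um$, one has $\up g\subseteq g\up$ and $g\um\subseteq\um g$, and iterating these gives $\up g^n\subseteq g^n\up$ and $g^n\um\subseteq\um g^n$ for every $n\geq 1$.

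The claim for $g$ is then proved by induction on $n$, the base case being trivial. Assuming $(UgU)^n=Ug^nU$, a direct computation using $U=\um\up$ yields
\[(UgU)^{n+1}=Ug^nU\cdot UgU=Ug^n(\um\up)gU\subseteq U(\um g^n)(g\up)U=Ug^{n+1}U,\]
where the last equality uses $U\um=U$ and $\up U=U$. The reverse inclusion $Ug^{n+1}U\subseteq(UgU)^{n+1}$ is immediate, since $g^{n+1}\in(UgU)^{n+1}$ and the latter set absorbs $U$ on both sides.

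For the statement on $g^{-1}$, observe that swapping $g$ with $g^{-1}$ interchanges the roles of $\up$ and $\um$, so $U$ is tidy above for $g^{-1}$ if and only if it is tidy above for $g$ (both amount to the decomposition $U=\up\um=\um\up$). The same induction then applies verbatim with $g^{-1}$ in place of $g$.

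The main obstacle is essentially bookkeeping: one must choose the decomposition of $U$ so that the intertwining relations push $g$ in the right direction, namely $\um$ must sit to the left of $g$ and $\up$ to its right, so that $g$ can migrate across them and consecutive powers of $g$ can merge.
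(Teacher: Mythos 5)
Your proof is correct and takes essentially the same route as the paper's: induction on $n$, inserting the decomposition $U=U_-U_+$ (from Proposition~\ref{tachar}) between $g^n$ and $g$, and letting the powers of $g$ migrate across $U_-$ and $U_+$ via their defining invariance properties, with the $g^{-1}$ statement handled by the symmetry exchanging $U_+$ and $U_-$. The only cosmetic difference is that the paper uses the equalities $Ug^nU_-=Ug^n$ and $U_+gU=gU$ to get equality in one step, whereas you prove one inclusion and supply the (trivial) reverse inclusion separately.
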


\begin{proof}
It is sufficient to show the first equation, since $g$ and $g^{-1}$ have the same tidy above subgroups. We prove this equation by induction on $n$. For $n=1$ there is nothing to prove.
For $n+1$ we get $(UgU)^{n+1} = (UgU)^nUgU = (Ug^nU)gU$ by induction hypothesis. The last term equals to $Ug^nU_-U_+gU$ since $U$ is tidy above for $g$. Using the definition of $U_+$ and $U_-$ we see that $Ug^nU_- = Ug^n$ and $U_+gU = gU$. Altogether we conclude that $(UgU)^{n+1} = Ug^nU_-U_+gU = Ug^ngU = Ug^{n+1}U$, which completes the induction step.
\end{proof}

\begin{proof}[Proof of Theorem~\ref{periodictheorem}]
We give the proof under the additional assumption that $G$ is second countable, which allows us to use sequences to check that $P(G)$ is closed.

Let $(g_i)_{i\geq 0}$ be a sequence in $P(G)$ with limit $g \in G$. We need to prove that $g$ is contained in $P(G)$. Let $U \in \mathcal{B}(G)$ be tidy above for $g$, and fix $i\geq 0$ such that $g_i \in UgU$ (which is an open neighbourhood of $g$). Then there are $u, v \in U$ with $g_i = ugv$. 

Now we want to prove the existence of an integer $n \geq 0$ such that $g_i^n \in U$. For this consider $(g_i^j)_{j \geq 0}$ which is a sequence in the compact set $\overline{\langle g_i \rangle}$  and thus has a convergent subsequence. Let $(g_i^{j_k})_{k \geq 0}$ be this convergent subsequence and $h$ its limit point. Since $h \in \overline{\langle g_i \rangle}$, there is an integer $m \in \mathbb{N}$ such that $ h \in g_i^m U$, which is an open neighbourhood of $g_i^m$. So we can build a new sequence $(g_i^{j_k-m})_{k\geq 0}$ converging to $g_i^{-m}h \in U$ which gives us the existence of the desired $n$ with $g_i^n = (ugv)^n\in U$.

Due to Lemma~\ref{powerprop}, there are $u', v' \in U$ such that $u'g^nv'=(ugv)^n \in U$, so already $g^n$ has to be contained in $U$. Therefore $\overline{\langle g^n \rangle}$ is compact as a closed subset of the compact set $U$, and $\overline{\langle g\rangle}$ is compact, since it has $\overline{\langle g^n \rangle}$ as compact subgroup of finite index. Hence $g$ is periodic.
\end{proof}

In \cite{Wil95}, Willis mention the following application of this result. In 1981 Hofmann asked if, for a general group $G$ such that $P(G)$ is dense in $G$, already $P(G) = G$ holds. In general the answer is no. A counterexample is the group $E$ of motions of the Euclidean plane, for which one can check that $P(E)$ consists of all rotations and reflections, and is a proper dense subgroup of $E$. From Theorem~\ref{periodictheorem} we can deduce a positive answer to Hofmann's question for totally disconnected locally compact groups.

\subsection{Construction of tidy subgroups}

In this paragraph we explain how to construct tidy subgroups. Roughly the idea of the procedure is, given a subgroup that is tidy above for $g \in G$, to add a $g$-invariant compact subgroup. 

For $g \in G$ and $U \in \bg$, we let \[ \preline = \left\{x \in G \, : \, x \in g^n U g^{-n} \, \, \text{for all but finitely many $n \in \mathbb{Z}$} \right\}. \]

The following proof appears in \cite[Lemma~5.14]{Wes}.

\begin{lemma} \label{lemma-L-rel-cpt}
The subset $\preline$ is a subgroup normalised by $g$ that is relatively compact. 
\end{lemma}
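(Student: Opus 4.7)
I will handle the algebraic and topological claims in order. For the subgroup property and the $g$-invariance, the argument is routine: to each $x \in \preline$ associate its (finite) \emph{bad set} $F_x = \{n \in \mathbb{Z} : x \notin g^n U g^{-n}\}$. Because every $g^n U g^{-n}$ is itself a subgroup of $G$, one has $F_{xy^{-1}} \subseteq F_x \cup F_y$, which is still finite, so $\preline$ is closed under the group operations. Likewise $F_{gxg^{-1}} = F_x - 1$ is finite, which gives $g \preline g^{-1} = \preline$.

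The heart of the lemma is relative compactness. My starting point is the decomposition
\[ \preline = \bigcup_{N \geq 0} C_N, \qquad C_N := \bigcap_{|n| \geq N} g^n U g^{-n}, \]
in which each $C_N$ is a closed subgroup of the compact group $g^N U g^{-N}$, hence itself a compact subgroup of $G$. I would try to show that this increasing union of compacts is contained in a compact subset by a sequential compactness argument: given $(x_m) \subseteq \preline$, pick $N_m$ minimal with $x_m \in C_{N_m}$. If $(N_m)$ is bounded we extract a convergent subsequence in the common compact $C_{N_0}$. If $N_m \to \infty$, the auxiliary sequences $u_m := g^{-N_m} x_m g^{N_m}$ and $v_m := g^{N_m} x_m g^{-N_m}$ both lie in $U$; after extracting so that $u_m \to u$, $v_m \to v$ in $U$, a continuity argument---writing $g^{-(N_m+\ell)} x_m g^{N_m+\ell} = g^{-\ell} u_m g^{\ell} \in U$ for every $\ell \geq 0$ and letting $m \to \infty$---yields $u \in \up$, and symmetrically $v \in \um$. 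The two are linked by the relation $v_m = g^{2N_m} u_m g^{-2N_m}$.

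The main obstacle is converting this analysis into an actual convergent subsequence for $(x_m)$. Naively one might hope $x_m \to u$, but this is already false in simple examples such as the wreath product $(\mathbb{Z}/2)^{\mathbb{Z}} \rtimes \mathbb{Z}$ (where $u$ may fail to lie in $\um$ even though the sequence $x_m$ still converges, but to a different limit). The correct argument must exploit both representations $x_m = g^{N_m} u_m g^{-N_m} = g^{-N_m} v_m g^{N_m}$ simultaneously and use the compactness of $U$ together with the fact that $u \in \up$ and $v \in \um$ are limits of $U$-valued sequences related by the expanding/contracting action of $g^{\pm 2N_m}$ as $N_m \to \infty$. This dynamical rigidity is what forces the accumulation point of $(x_m)$ to exist in $\overline{\preline}$, and it is the genuine technical content of the lemma.
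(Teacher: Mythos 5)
Your treatment of the algebraic part (bad sets $F_x$, $F_{xy^{-1}} \subseteq F_x \cup F_y$, $F_{gxg^{-1}} = F_x - 1$) is fine, and so is the decomposition $\preline = \bigcup_N C_N$ with $C_N = \bigcap_{|n| \geq N} g^n U g^{-n}$ compact, as well as the observation that the normalised sequences $u_m = g^{-N_m} x_m g^{N_m}$ and $v_m = g^{N_m} x_m g^{-N_m}$ subconverge to points $u \in \up$ and $v \in \um$. But the proof stops exactly where the lemma begins: you acknowledge yourself that extracting an accumulation point of $(x_m)$ itself is ``the genuine technical content'' and you do not supply it. Knowing $u_m \to u \in \up$ gives no control on $x_m = g^{N_m} u_m g^{-N_m}$, because conjugation by $g^{N_m}$ is not equicontinuous: the sets $g^{N_m} \up g^{-N_m}$ increase to $\upp$, which is in general unbounded (e.g.\ for $g = \mathrm{diag}(p,1)$ in $\mathrm{GL}_2(\mathbb{Q}_p)$, where $\upp'$ contains a copy of $\mathbb{Q}_p$). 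So the one-sided information $u \in \up$, $v \in \um$ cannot by itself prevent $(x_m)$ from escaping to infinity; an appeal to ``dynamical rigidity'' is a hope, not an argument. (A secondary point: even if completed, a sequential argument only yields relative compactness under a metrisability hypothesis, which the lemma does not assume; an explicit compact set containing $\preline$ avoids this.)

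The missing mechanism is precisely how the two-sided membership condition is exploited, and the paper does it by a finite-index bookkeeping rather than by passing to limits. Since $g\up g^{-1} \cap U = \up$ has finite index in the compact group $g \up g^{-1}$, one can choose a finite set $X \subseteq g\up g^{-1} \cap \preline$ of coset representatives of $\up \cap \preline$ in $g\up g^{-1} \cap \preline$, and prove by induction that
\[ g^n \up g^{-n} \cap \preline \subseteq \Bigl( \prod_{i=n-1}^{0} g^i X g^{-i} \Bigr) \up . \]
The key point is then that $X$, being a \emph{finite} subset of $\preline$, lies in $g^{-n_0} \um g^{n_0}$ for some $n_0$, so all conjugates $g^i X g^{-i}$ with $i \geq n_0$ are absorbed into $U$; hence every $g^n \up g^{-n} \cap \preline$, and therefore $\preline = \bigcup_n (g^n \up g^{-n} \cap \preline)$, is contained in the single compact set $U \bigl( \prod_{i=n_0-1}^{0} g^i X g^{-i} \bigr) U$. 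Some argument of this kind (or another explicit bound) is needed to close your proof; as written, the relative compactness claim is not established.
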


\begin{proof}
The verification of the fact that $\preline$ is a subgroup normalised by $g$ is easy, and we leave it to the reader. We prove that $\preline$ is contained in a compact subset of $G$.

Since $g U_+ g^{-1}$ is compact and $U$ is open, the intersection $g U_+ g^{-1} \cap U$ has finite index in $g U_+ g^{-1}$. But this intersection is equal to $U_+$ by definition, and it follows that $U_+ \cap \preline$ has finite index in $g U_+ g^{-1} \cap \preline$. We let $X$ be a finite set of left coset representatives, so that \[ g U_+ g^{-1} \cap \preline = X \left( U_+ \cap \preline \right). \]
We claim that for every $n \geq 1$, one has \begin{equation} \label{eq-L-rl-comp} g^n U_+ g^{-n} \cap \preline = \left( \prod_{i=n-1}^0 g^i X g^{-i} \right) U_+ \cap \preline. \end{equation}
We argue by induction. The case $n=1$ is nothing but the definition of the set $X$. Assume that (\ref{eq-L-rl-comp}) holds for some $n \geq 1$. Since $\preline$ is normalised by $g$, we have \[ g^{n+1} U_+ g^{-(n+1)} \cap \preline = g \left(  g^n U_+ g^{-n} \cap \preline \right) g^{-1}. \] Using the induction hypothesis we obtain \begin{align*} g^{n+1} U_+ g^{-(n+1)} \cap \preline & =g \left( \prod_{i=n-1}^0 g^i X g^{-i} \right) (U_+ \cap \preline) g^{-1} \\ & = \left( \prod_{i=n-1}^0 g^{i+1} X g^{-(i+1)} \right) g (U_+ \cap \preline) g^{-1} \\ & = \left( \prod_{i=n}^1 g^{i} X g^{-i} \right) X (U_+ \cap \preline), \end{align*} where the last equality follows from the definition of $X$. So we obtain that \[ g^{n+1} U_+ g^{-(n+1)} \cap \preline = \left( \prod_{i=n}^0 g^{i} X g^{-i} \right) U_+ \cap \preline, \] and the proof of the induction step is complete.

Since $X$ is finite, there exists some integer $n_0 \geq 1$ such that $X$ lies inside $g^{-n_0} U_- g^{n_0}$. It follows that for every $n \geq n_0$, we have \[ g^n X g^{-n} \subset g^{n-n_0} U_- g^{-(n-n_0)} \subset U_- \subset  U. \] Using (\ref{eq-L-rl-comp}) we obtain that for every $n \geq n_0$, \[ g^n U_+ g^{-n} \cap \preline \subset U \left( \prod_{i=n_0-1}^0 g^i X g^{-i} \right) U_+ \cap \preline \subset U \left( \prod_{i=n_0-1}^0 g^i X g^{-i} \right) U =: K. \] We obtain that $\preline$, which is the increasing union for $n \geq n_0$ of $g^n U_+ g^{-n} \cap \preline$, is contained in $K$. The latter being compact since $X$ is finite and $U$ is compact, this finishes the proof.
\end{proof}

For $g \in G$ and $U \in \bg$, we denote by $L_U$ the closure of $\preline$ in $G$. The reason why the definition of $\preline$ is relevant in this context comes from the following result.

\begin{proposition} \thlabel{chartidy}
Let $g \in G$ and $U \in \bg$ being tidy above for $g$. Then $U$ is tidy for $g$ if and only if $L_U \leq U$.
\end{proposition}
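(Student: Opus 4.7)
The plan is to reformulate the claim in terms of the subgroups $\upp$ and $\umm$. Unpacking definitions, $x \in \upp$ if and only if $x \in g^k U g^{-k}$ for all sufficiently large positive $k$, while $x \in \umm$ if and only if $x \in g^k U g^{-k}$ for all sufficiently negative $k$; hence $\preline = \upp \cap \umm$. Since $U$ is closed in $G$ and $L_U = \overline{\preline}$, the inclusion $L_U \leq U$ is equivalent to $\preline \subseteq U$.

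For the implication $L_U \leq U \Rightarrow U$ tidy, I would first exploit that $L_U$ is normalised by $g$ (Lemma~\ref{lemma-L-rel-cpt}), so that $L_U = g^n L_U g^{-n} \subseteq g^n U g^{-n}$ for every $n \in \mathbb{Z}$, whence $L_U \subseteq \bigcap_{n \in \mathbb{Z}} g^n U g^{-n} = \up \cap \um$. Combined with the trivial inclusion $\up \cap \um \subseteq \preline \subseteq L_U$, this yields $\preline = L_U = \up \cap \um$, which is closed as an intersection of closed sets. It then remains to show that $\upp$ and $\umm$ are closed. For $\upp$, I would take a convergent sequence $x_k \to x$ in $\upp$ and write $x_k = g^{n_k} z_k g^{-n_k}$ with $z_k \in \up$ and $n_k \geq 0$. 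If $(n_k)$ is bounded, pass to a constant subsequence and conclude directly from compactness of $g^n \up g^{-n}$. If $n_k \to \infty$, extract by compactness a limit $z$ of $z_k$ in $\up$, and use the identification $\preline = \up \cap \um$ to pin down the limit and place $x$ inside $\upp$. The case of $\umm$ is symmetric.

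The forward direction is the main obstacle. Assuming $U$ is tidy, the intersection $\preline = \upp \cap \umm$ is already closed; we must show $\preline \subseteq U$. Given $x \in \preline$, we have two simultaneous representations $x = g^{n_+} u_+ g^{-n_+}$ with $u_+ \in \up$ and $x = g^{-n_-} u_- g^{n_-}$ with $u_- \in \um$. The idea is to use the tidy above decomposition $U = \up \um$ repeatedly while exploiting the closedness of $\upp$ and $\umm$ to inductively lower the exponents $n_+, n_-$ down to $0$, thereby exhibiting $x$ as a product of elements of $\up$ and $\um$, hence as an element of $U$. The subtle point is ensuring that this descent actually terminates (or converges), which is precisely where the two closedness hypotheses come into play simultaneously; I expect this step to be the most delicate and to require a careful compactness argument intertwining the decompositions coming from $\upp$ and from $\umm$.
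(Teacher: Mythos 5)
Your reductions are correct as far as they go: $\preline = \upp \cap \umm$, the equivalence of $L_U \leq U$ with $\preline \subseteq U$, and, in the backward direction, the chain $L_U \subseteq \up \cap \um \subseteq \preline \subseteq L_U$ obtained from the $g$-invariance of $L_U$. But neither implication is actually proved. Tidiness below asks that $\upp$ and $\umm$ be closed, and your treatment of the essential case $n_k \to \infty$ is not an argument: the limit $z$ of the $z_k$ in $\up$ gives no control whatsoever over $x = \lim g^{n_k} z_k g^{-n_k}$, since conjugation by $g^{n_k}$ with $n_k \to \infty$ severs any continuity link between the two limits; ``use the identification $\preline = \up \cap \um$ to pin down the limit'' has no content here. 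Tellingly, your sketch of this direction never uses the hypothesis that $U$ is tidy above, which is where the substance lies. The standard route (essentially \cite[Proposition~5.15]{Wes}, which the paper cites instead of giving a proof) is: from $U = \up\um$ one gets $\upp \cap U = \up(\upp \cap \um)$, because for $x \in \upp \cap U$ written as $x = u_+u_-$ one has $u_- = u_+^{-1}x \in \upp$; next, any $y \in \upp \cap \um$ lies in $g^kUg^{-k}$ for all $k \leq 0$ and for all large $k$, hence $y \in \preline \subseteq U$, and since $\preline$ is normalised by $g$ also $g^{-m}yg^{m} \in U$ for all $m \geq 0$, i.e.\ $y \in \up$. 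Therefore $\upp \cap U = \up$ is compact, so $\upp$ is a locally closed subgroup of $G$ and hence closed; symmetrically for $\umm$.

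For the forward direction you record only an intention (``lower the exponents\dots I expect this step to be the most delicate''), and you yourself flag that you do not know the proposed descent terminates. That is precisely the mathematical content of this half of the proposition, and it is missing: nothing in your text shows that an element admitting the two representations $x = g^{n_+}u_+g^{-n_+}$ and $x = g^{-n_-}u_-g^{n_-}$ must lie in $U$ once $\upp$ and $\umm$ are closed. As it stands the proposal is a plan rather than a proof; to complete it you would need to supply this implication (see \cite[Lemma~3]{Wil94} or \cite[Proposition~5.15]{Wes}) and to replace the limiting step of the backward implication by an argument that genuinely exploits tidiness above, for instance the one indicated above.
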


\begin{proof}
See for instance \cite[Proposition~5.15]{Wes}.
\end{proof}

The following gives a recipe to construct tidy subgroups.

\begin{proposition}\thlabel{existtidy}
For every $g \in G$, there exists $U \in \bg$ that is tidy for~$g$.
\end{proposition}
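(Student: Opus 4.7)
The plan is to combine Propositions \ref{prop-exist-tdab} and \ref{chartidy}. I would start from a subgroup $V$ that is tidy above for $g$, absorb into it the compact $g$-invariant subgroup $K := L_V$ supplied by Lemma \ref{lemma-L-rel-cpt}, and then restore the tidy-above property, producing a compact open subgroup $U$ which satisfies $L_U \leq U$; Proposition \ref{chartidy} will then allow us to conclude that $U$ is tidy for $g$.

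Concretely, I would first invoke Proposition \ref{prop-exist-tdab} to obtain $V \in \bg$ tidy above for $g$ and set $K := L_V$. Since $K$ is compact and $G$ is totally disconnected, a van Dantzig style argument yields a compact open subgroup $V_0 \subseteq V$ normalised by $K$, so that $W := K V_0 = V_0 K$ is a compact open subgroup of $G$ containing $K$ (compact as the image of $K \times V_0$ under multiplication, open because it contains $V_0$). Applying the procedure of Proposition \ref{prop-exist-tdab} to $W$ — setting $W_n := \bigcap_{i=0}^n g^i W g^{-i}$ and taking $U := W_N$ for $N$ large enough that the chain of indices stabilises — produces a subgroup tidy above for $g$. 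Crucially, since $K$ is $g$-invariant and contained in $W$, it is contained in every $g^i W g^{-i}$, hence in $U$.

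It then remains to check that $L_U \leq U$. Since $U \subseteq W$, one has $L_U \subseteq L_W$, so the task reduces to showing $L_W \subseteq K$: every element of $\mathcal{L}_W$ admits, for cofinitely many $n$, a decomposition $k_n v_n$ with $k_n \in K$ and $v_n \in g^n V_0 g^{-n}$, and a compactness argument on the $k_n$'s, combined with the inclusion $L_{V_0} \subseteq L_V = K$ coming from $V_0 \subseteq V$, forces such elements to already belong to $K \subseteq U$. This final verification is where the interplay between the compact $g$-invariant part $K$ and the tidy-above structure inherited from $V$ is used most heavily, and I expect it to be the main technical obstacle: balancing the absorbing step (which endangers tidiness above) against the restoration step (which could a priori shrink the subgroup below $K$) requires careful accounting. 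The complete argument is spelled out in \cite[Proposition~5.16]{Wes}.
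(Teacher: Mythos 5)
Your construction is essentially the paper's: produce a tidy-above subgroup $V$ via Proposition~\ref{prop-exist-tdab}, absorb the compact $g$-invariant subgroup $L_V$ by passing to the normalised open subgroup $\bigcap_{x\in L_V} xVx^{-1}$, and re-run the tidy-above procedure on the resulting compact open subgroup, observing that $L_V$ survives all the intersections because it is $g$-invariant. Like the paper, you leave the genuinely hard point to the literature — namely that the resulting subgroup is tidy below (in your formulation, that $L_W \leq L_V$, which your one-directional compactness argument on the $k_n$'s does not by itself establish, since the decompositions for $n\to+\infty$ and $n\to-\infty$ must be played against each other as in Willis's Lemmas 3.3--3.8) — and you defer to essentially the same place in Wesolek's notes that the paper cites, so the two proofs coincide in substance.
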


\begin{proof}
The strategy consists essentially in two steps.

\medskip

\textbf{Step 1}. Given a compact open subgroup $U$ of $G$, there exists $n \geq 0$ such that $\bigcap_{i=0}^n g^i U g^{-i}$ is tidy above for $g$. The existence of such an integer has been shown in the proof of Proposition~\ref{prop-exist-tdab}.

\medskip

\textbf{Step 2}. Assume we have a compact open subgroup $U$ tidy above for $g$. Let $U' = \bigcap_{x \in L_U} xUx^{-1}$. Since $L_U$ is compact according to Lemma~\ref{lemma-L-rel-cpt}, it follows that $U' = \bigcap_{i=1}^k x_i U x_i ^{-1}$ for some elements $x_1, \ldots, x_k \in L_U$, and therefore $U'$ is open in $G$. Since by definition $U'$ is normalised by $L_U$, it follows that $U'L_U$ is a compact open subgroup of $G$. Now going back to Step 1, we can find some integer $n \geq 0$ such that $U'' = \bigcap_{i=0}^n g^i U'L_U g^{-i}$ is tidy above for $g$. Since $L_U$ is normalised by $g$, clearly $L_U \subset U''$. We refer the reader to \cite[Proposition~5.17]{Wes} or \cite[Lemmas 3.3-3.8]{Wil01} to see that this property implies that $U''$ is also tidy below for $g$.
\end{proof}

\section{Examples} \label{sec-ex}

The goal of this section is to illustrate with several examples the notions and results from the previous section.

\subsection{Infinite direct product of a finite group}

Let $F$ be a non-trivial finite group and $G=F^{\mathbb Z}$ endowed with the product topology. We define $\alpha\in\Aut(G)$ by $\alpha((f_n))=(f_{n+1})$. Since $G$ is compact, it is itself a tidy subgroup for $\alpha$. For every subset $I\subseteq \mathbb Z$, define \[ E_I=\{(f_n) \in G \, : \, f_n = e \, \, \text{for every} \, \, n \in I\}.\] Let us consider the compact open subgroup $U= E_{[-5,3]}$. By definition we have $U_-=E_{[-5,\infty[}$ and $U_+=E_{]-\infty,3]}$. We observe that \[U\subseteq E_{]-\infty,3]} E_{[-5,\infty[} =U_+U_-\subseteq U\] and therefore $U$ is tidy above for $\alpha$. See Figure~\ref{fig:singlegap}.

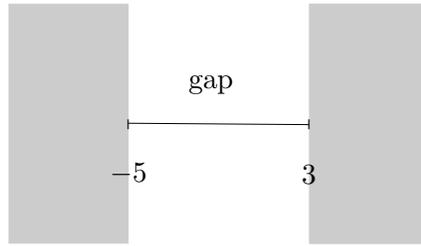
\begin{figure}[h!t!]

\psscalebox{1.0 1.0} % Change this value to rescale the drawing.
{
\begin{pspicture}(0,-1.64)(5.6419997,1.64)
\definecolor{colour1}{rgb}{0.8,0.8,0.8}
\psframe[linecolor=white, linewidth=0.002, fillstyle=gradient, gradlines=2000, gradbegin=colour1, gradend=colour1, dimen=inner](1.6104972,1.6021274)(0.0019995116,-1.6)
\psframe[linecolor=white, linewidth=0.04, fillstyle=gradient, gradlines=2000, gradbegin=colour1, gradend=colour1, dimen=inner](5.6019993,1.6)(4.0019994,-1.6)
\psline[linecolor=black, linewidth=0.01](1.6020288,0.0076600458)(4.0019584,-0.010724064)
\rput[bl](2.4019995,0.4){gap}
\psdots[linecolor=black, dotstyle=|, dotsize=0.1](1.6019995,0.0)
\psdots[linecolor=black, dotstyle=|, dotsize=0.1](4.0019994,0.0)
\rput[b](1.6019995,-0.8){$-5$}
\rput[b](4.0019994,-0.8){$3$}
\end{pspicture}
}

\caption{This figure illustrates the subgroup $U=E_{[-5,3]}$ of $F^{\mathbb{Z}}$. The fact that $U$ is tidy above corresponds to the existence of only one \enquote{gap}.
}
\label{fig:singlegap}
\end{figure}

More generally, if $I$ is a finite non-empty subset of $\mathbb{Z}$ whose minimum and maximum are denoted respectively by $a$ and $b$ and $U = E_I$, then we observe that $U_- = E_{[a, \infty[}$ and $U_+ = E_{]-\infty, b]}$, hence $U_+ U_- = E_{[a, b]}$. Therefore $U$ is tidy above for $\alpha$ if and only if $I = [a, b]$. Following the strategy used in the proof of Proposition~\ref{prop-exist-tdab}, we see that the intersection $\bigcap_{i=0}^n \alpha^i (E_I)$ will indeed be equal to some $E_{[a', b]}$ for $n$ large enough (see Figure~\ref{fig:multigaps}). 

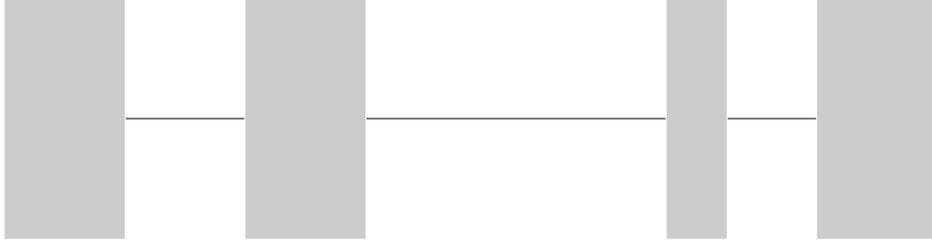
\begin{figure}[h!t!]
\psscalebox{1.0 1.0} % Change this value to rescale the drawing.
{
\begin{pspicture}(0,-1.61)(12.42,1.61)
\definecolor{colour0}{rgb}{0.8,0.8,0.8}
\psline[linecolor=black, linewidth=0.01](1.61,0.0)(2.6766667,0.0)(3.21,0.0)
\psline[linecolor=black, linewidth=0.01](4.81,0.0)(8.446363,0.0)(8.81,0.0)
\psline[linecolor=black, linewidth=0.01](9.61,0.0)(10.81,0.0)(10.81,0.0)
\psframe[linecolor=white, linewidth=0.01, fillstyle=gradient, gradlines=2000, gradbegin=colour0, gradend=colour0, dimen=inner](1.61,1.6)(0.01,-1.6)
\psframe[linecolor=white, linewidth=0.01, fillstyle=gradient, gradlines=2000, gradbegin=colour0, gradend=colour0, dimen=inner](4.81,1.6)(3.21,-1.6)
\psframe[linecolor=white, linewidth=0.01, fillstyle=gradient, gradlines=2000, gradbegin=colour0, gradend=colour0, dimen=inner](9.61,1.6)(8.81,-1.6)
\psframe[linecolor=white, linewidth=0.01, fillstyle=gradient, gradlines=2000, gradbegin=colour0, gradend=colour0, dimen=inner](12.41,1.6)(10.81,-1.6)
\end{pspicture}
}
\caption{A compact open subgroup of $F^{\mathbb{Z}}$ which is not tidy above for $\alpha$. The successive intersections $\bigcap_{i=0}^n \alpha^i (E_I)$ will gradually exhibit fewer and fewer \enquote{gaps}.}
\label{fig:multigaps}
\end{figure}

Furthermore we observe that $U_{++}$ (resp.\ $U_{--}$) is the subgroup of sequences that are trivial below (resp.\ above) some index, which is a proper dense subgroup, hence no $E_I$ is tidy below for $\alpha$. We can also see that $\mathcal{L}_U$ is the dense subgroup of sequences with finite support, which is another reason why $E_I$ cannot be a tidy subgroup by Proposition~\ref{chartidy}. Moreover, we observe that the tidy subgroup given by the recipe of Proposition~\ref{existtidy} is the whole group $G$.

\subsection{A $p$-adic Lie group}

We let $p$ be a prime number, $G = \mathrm{GL}_2(\mathbb{Q}_p)$ be the group of invertible $2 \times 2$ matrices over the field of $p$-adic numbers, and $g \in G$ be the diagonal matrix $g = \mathrm{diag}(p,1)$. We follow the recipe to construct a compact open subgroup of $G$ tidy for $g$. Let us start with the compact open subgroup $U = \mathrm{SL}_2(\mathbb{Z}_p)$. Since conjugation by $g^n$, $n \in \mathbb{Z}$, multiplies the upper right entry of a matrix by $p^n$ and the lower left entry by $p^{-n}$, it follows that \[ U_+ = \begin{pmatrix} * & 0 \\ * & *\end{pmatrix} \cap \mathrm{SL}_2(\mathbb{Z}_p) \, \, \text{and} \, \, U_- = \begin{pmatrix} * & * \\ 0 & *\end{pmatrix} \cap \mathrm{SL}_2(\mathbb{Z}_p). \]
Now we claim that $U_+ U_- \subsetneq U$. Indeed, argue by contradiction and assume that $U_+ U_- = U$. Using the natural morphism $\mathrm{SL}_2(\mathbb{Z}_p) \twoheadrightarrow \mathrm{SL}_2(\mathbb{F}_p)$, we obtain that a similar equality holds in the finite group $\mathrm{SL}_2(\mathbb{F}_p)$, where $\mathbb{F}_p$ is the field with $p$ elements. But this is impossible for counting reasons, since the set of products $xy$ in $\mathrm{SL}_2(\mathbb{F}_p)$ with $x$ lower triangular and $y$ upper triangular has cardinality $(p-1)p^2$, whereas the group $\mathrm{SL}_2(\mathbb{F}_p)$ has cardinality $(p-1)p(p+1)$.

So it follows that $U = \mathrm{SL}_2(\mathbb{Z}_p)$ is not tidy above for $g$. Following the construction on the proof of Proposition \ref{prop-exist-tdab}, we let \[ U' = U \cap g U g^{-1} = \begin{pmatrix} * & p \mathbb{Z}_p \\ * & * \end{pmatrix} \cap \mathrm{SL}_2(\mathbb{Z}_p). \] Similarly $U_+'$ and $U_-'$ are the sets of matrices of determinant one of the form \[ U_+' = \begin{pmatrix} \mathbb{Z}_p^{\times} & 0 \\ \mathbb{Z}_p & \mathbb{Z}_p^{\times} \end{pmatrix} \, \, \text{and} \, \, U_-' = \begin{pmatrix} \mathbb{Z}_p^{\times}  & p \mathbb{Z}_p \\ 0 & \mathbb{Z}_p^{\times} \end{pmatrix}.\]
Now observe that the upper left entry of any element of $U'$ must be a unit in $\mathbb{Z}_p$, and a trivial computation shows that \[ \begin{pmatrix} a & p b \\ c & d \end{pmatrix} = \begin{pmatrix} a & 0 \\ c & a^{-1} \end{pmatrix} \begin{pmatrix}1  & p b a^{-1} \\ 0 & 1 \end{pmatrix}, \] so one has $U' = U_+' U_-'$ and $U'$ is tidy above for $g$. Moreover \[ U_{++}' = \begin{pmatrix} \mathbb{Z}_p^{\times} & 0 \\ \mathbb{Q}_p & \mathbb{Z}_p^{\times} \end{pmatrix} \, \, \text{and} \, \, U_{--}' = \begin{pmatrix} \mathbb{Z}_p^{\times}  & \mathbb{Q}_p \\ 0 & \mathbb{Z}_p^{\times} \end{pmatrix}\] are closed in $G$, so $U'$ is also tidy below for $g$. Therefore $U'$ is minimising for $g$, and it follows that the scale of $g$ is the index of $U_+'$ in $gU_+'g^{-1}$, which is equal to $p$.

\subsection{The automorphism group of a regular tree} 
We let $G = \mathrm{Aut}(T_d)$ be the automorphism group of the regular tree $T_d$ of degree $d \geq 3$, and we compute the scale function and describe tidy subgroups of any element of $G$ (see also \cite[Section 3]{Wil94}).

If $\varphi \in G$ is elliptic then $\varphi$ normalises either an edge-stabiliser or a vertex-stabiliser. Such a compact open subgroup is necessarily tidy for $\varphi$, and $\varphi$ has scale one.

Now assume that $\varphi$ is a hyperbolic element whose axis is defined by the sequence of vertices $(v_n)$, $n \in \mathbb{Z}$, indexed so that $\varphi$ translates in the positive direction. We denote by $\xi^{-}, \xi^{+} \in \partial T_d$ the repelling and attracting endpoints of $\varphi$. Let $v$ be a vertex of $T_d$, and let $U \in \bg$ be the stabiliser of $v$. Then $U_+$ (resp.\ $U_-$) is the subgroup of elements fixing pointwise the set of vertices $\varphi^n(v)$ for $n \geq 0$ (resp.\ $n \leq 0$). In particular any element of $U_+U_-$ fixes pointwise the unique geodesic between $v$ and the axis of $\varphi$. It follows that if $v$ does not lie on the axis of $\varphi$ then $U$ cannot be tidy above for $\varphi$. When $v$ lies on the axis of $\varphi$, a similar argument shows that an element of $U_+U_-$ cannot send the edge emanating from $v$ and pointing toward $\xi^{-}$ to the one pointing toward $\xi^{+}$, so again $U_+U_- \subsetneq U$. 

Now let $U$ be the stabiliser of $v_0$ for example, and let $U' = U \cap \varphi U \varphi^{-1}$ be the stabiliser of the geodesic between $v_0$ and $v_{\ell}$, where $\ell \geq 1$ is the translation length of $\varphi$. Note that any element of $U'$ fixes the two half-trees of $T_d$ obtained by cutting the edge $(v_0,v_1)$. Again $U_+'$ (resp.\ $U_-'$) is the pointwise stabiliser of the geodesic ray $(\varphi^n v_0)_{n \geq 0}$ (resp.\ $(\varphi^n v_{\ell})_{n \leq 0}$), and an easy verification shows that $U' = U_+' U_-'$. Moreover $U_{++}'$ (resp.\ $U_{--}'$) is the subgroup of the stabiliser in $G$ of $\xi^{+}$ (resp.\ $\xi^{-}$) consisting of elliptic isometries, and therefore is closed. So $U'$ is tidy for $g$.

A direct computation shows that the stabiliser of $v_{2 \ell}$ has index $(d-1)^{\ell}$ in $U'$, so it follows that the scale of $\varphi$ is equal to $(d-1)^{\ell}$.

\section{Ergodic automorphisms of locally compact groups}

We conclude with an application of tidy subgroups to ergodic theory, due to Previts and Wu \cite{Prev-Wu}. In 1955 Halmos asked the following question (see \cite[p.~29]{Halmos}): \enquote{\emph{Can an automorphism of a locally compact but non-compact group be an ergodic measure-preserving transformation?}}. Recall that a transformation $T$ is called \emph{ergodic} relatively to a measure $\mu$ if for any measurable set $E$ such that $\mu (T^{-1} E \triangle E) = 0$, one has $\mu (E) = 0$ or $\mu (E^c) = 0$. Here, ergodicity has to be understood with respect to a left-invariant Haar measure on the group.

The answer to Halmos's question is negative, as it was progressively proved by Juzvinski\u{i}, Rajagopalan, Kaufman, Wu, and Aoki in several works ranging from 1965 to 1985. See the introduction of \cite{Prev-Wu} for more details on this topic. We give here only a part of this solution, namely the result of Aoki for the totally disconnected case, following the proof given in \cite{Prev-Wu}.

\begin{theorem}[{\cite[Theorem~1]{Aoki}}] \label{MGALB:thm_halmos_aoki}
 A totally disconnected locally compact group admitting an ergodic automorphism must be compact.
\end{theorem}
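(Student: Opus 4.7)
I argue by contradiction. Assume $G$ is a non-compact totally disconnected locally compact group and $\alpha \in \aut$ is an ergodic measure-preserving automorphism of $G$ with respect to a Haar measure $\mu$. The automorphism analogue of Proposition~\ref{prop-modular} then yields $s(\alpha) = s(\alpha^{-1})$, the relevant modular factor being $1$ since $\mu$ is $\alpha$-invariant. My overall plan is to exhibit a compact open $\alpha$-invariant subgroup $V$; once this is in hand, ergodicity applied to $V$ gives $\mu(V^c) = 0$, and since $V^c$ is a disjoint union of open $V$-cosets each of measure $\mu(V) > 0$, we must have $V^c = \varnothing$, so $G = V$ is compact — contradicting the hypothesis.

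Fix $U \in \bg$ tidy for $\alpha$, as furnished by Proposition~\ref{existtidy}. In the easy case $s(\alpha) = 1$, the definition of the scale forces $\alpha(U) \subseteq U$, and measure preservation upgrades this to $\alpha(U) = U$, so $V = U$ works and we are done via the paragraph above.

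The main obstacle is the case $s(\alpha) > 1$. Here I plan to produce a wandering set of positive measure for $\alpha$, contradicting the classical theorem that an ergodic measure-preserving transformation on a $\sigma$-finite measure space must be conservative. Consider $W = U \setminus \alpha(U)$: since $|U : U \cap \alpha(U)| = s(\alpha^{-1}) = s(\alpha) > 1$ and $\alpha$ preserves $\mu$, we have $\mu(W) = \mu(U)(1 - 1/s(\alpha)) > 0$. To show that $W$ is wandering, i.e.\ that $\alpha^k(W) \cap W = \varnothing$ for every $k \neq 0$, the cases $k = \pm 1$ are immediate from the very definition of $W$. For $|k| \geq 2$ the emptiness of $\alpha^k(W) \cap W$ reduces to the following convexity property of the tidy subgroup $U$: if $g \in U$ and $\alpha^k(g) \in U$ for some $k \geq 1$, then $\alpha^j(g) \in U$ for all $0 \leq j \leq k$. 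Establishing this convexity property is the core technical content of the argument. I expect the tidy factorisation $U = U_+ U_-$ together with the inclusions $\alpha(U_-) \subseteq U_-$ and $\alpha(U_+) \supseteq U_+$ to reduce the question to the corresponding asymmetric statement for $u_+ \in U_+$ alone: if $\alpha^k(u_+) \in U$ then $\alpha^j(u_+) \in U$ for all $0 \leq j \leq k$. Controlling this trapping of the $U_+$-component is where I expect the principal difficulty of the proof to lie, and this is precisely where the delicate interaction between tidiness above and tidiness below must enter.
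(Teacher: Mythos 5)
Your route is genuinely different from the paper's (which, after reducing to a second countable group and an automorphism with a dense orbit, uses only the topological facts that $U^\star=\bigcup_{i\geq 0}\alpha^i(U)$ is clopen for tidy $U$ and that a clopen set swallowed by $\alpha$ and meeting a dense orbit must be everything), but as it stands it has a genuine gap, and it is exactly the one you flag yourself: the \enquote{convexity} property that $g\in U$ and $\alpha^k(g)\in U$ force $\alpha^j(g)\in U$ for $0\leq j\leq k$. Nothing in the present paper yields this. Via the factorisation $U=U_+U_-$ your reduction is correct, and the statement is equivalent to $U\cap\alpha^{-k}(U)=\bigcap_{i=0}^{k}\alpha^{-i}(U)$, or again to $\upp\cap U=U_+$ for tidy $U$; but proving any of these requires the deeper layer of Willis's theory that this paper deliberately omits --- for instance the facts that a subgroup tidy for $\alpha$ is tidy (hence minimising) for every power $\alpha^k$, that $s(\alpha^k)=s(\alpha)^k$, and the index computation $|U:\bigcap_{i=0}^{k}\alpha^{-i}(U)|=s(\alpha^{-1})^k$ for subgroups tidy above (from which equality of the outer and inner indices forces $U\cap\alpha^{-k}(U)=\bigcap_{i=0}^{k}\alpha^{-i}(U)$). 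The property is true, so your strategy is salvageable by proving or citing these results (they are in \cite{Wil94} and \cite{Wes}), but the proposal as written defers precisely the decisive lemma, so the wandering-set argument is not yet a proof. Note also that tidiness below is genuinely needed here: for $U=E_I$ in $F^{\mathbb{Z}}$ (tidy above, not tidy below) one has $\upp\cap U\supsetneq U_+$, so convexity cannot come from tidiness above alone.

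Two secondary points need attention even once the convexity lemma is in place. First, \enquote{an ergodic measure-preserving transformation of a $\sigma$-finite space is conservative} is false in general (the shift on $\mathbb{Z}$ with counting measure is ergodic and totally dissipative); you need the measure to be non-atomic, which is fine here because one may assume $G$ non-discrete, so Haar measure is atomless --- but this must be said, as must the $\sigma$-finiteness, which comes from the paper's first reduction lemma. Second, you assume throughout that $\alpha$ preserves Haar measure (this is what gives $s(\alpha)=s(\alpha^{-1})$ and the measure count for $W$); this is consistent with Halmos's question about measure-preserving transformations, but it is an extra hypothesis compared with the paper's argument, which uses ergodicity alone and never invokes invariance of the measure, so you should either add it to the statement you prove or justify why it may be assumed.
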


Conversely, a compact group may have ergodic automorphisms: consider the Bernoulli shift $\sigma$ on the profinite group ${(\mathbb{Z} / 2 \mathbb{Z})}^{\mathbb{Z}}$, defined by $\sigma((g_n)) = (g_{n - 1})$.

It is clear that a non-trivial discrete group cannot have an ergodic automorphism, so we may assume the group under consideration to be non-discrete.

Before going to the proof of Theorem~\ref{MGALB:thm_halmos_aoki}, let us mention that, by a result of Rajagopalan (\cite[Theorem~1]{Raja}), a continuous ergodic automorphism of a locally compact group is automatically bicontinuous.

\subsection{Reduction of the problem} We will first gradually reduce the problem to a more tractable case. More precisely, we will show that we may assume the group to be second countable and the automorphism to have a dense orbit.

\begin{lemma}
 If a locally compact group $G$ admits an ergodic automorphism $f$, then $G$ is $\sigma$-compact.
\end{lemma}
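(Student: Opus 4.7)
The plan is to produce an $f$-invariant open $\sigma$-compact subgroup $H$ of $G$ and then use ergodicity (applied to $H$ itself, viewed as an $f$-invariant measurable set) to force $H = G$.

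First, I would recall the standard fact that any locally compact group has an open $\sigma$-compact subgroup: take a compact symmetric neighbourhood $K$ of the identity and let $H_0 = \bigcup_{n \geq 1} K^n$, which is an open subgroup and $\sigma$-compact by construction. Since $f$ is a bicontinuous automorphism, each $f^n(H_0)$ is again an open $\sigma$-compact subgroup of $G$. The union $\bigcup_{n \in \mathbb{Z}} f^n(H_0)$ need not itself be a subgroup, but the subgroup $H$ it generates is: $H$ is open (it contains the open set $H_0$), it is $\sigma$-compact (it is obtained as a countable union of finite products of elements of a countable union of $\sigma$-compact sets), and by construction $f(H) = H$.

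Next, I would exploit that any open subgroup of a topological group is also closed, so $H$ is clopen, and therefore $H^c$ is open as well. Since $H$ is $f$-invariant, $f^{-1}(H) \triangle H = \emptyset$, so ergodicity of $f$ with respect to a left Haar measure $\mu$ applies to $H$ and yields $\mu(H) = 0$ or $\mu(H^c) = 0$. But $H$ is a non-empty open set in a locally compact group, so $\mu(H) > 0$; hence $\mu(H^c) = 0$. Because $H^c$ is open and Haar measure assigns positive mass to every non-empty open set, this forces $H^c = \emptyset$, i.e.\ $G = H$. Thus $G$ is $\sigma$-compact.

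There is no real obstacle here; the only subtlety is to make sure we end up with an $f$-invariant open $\sigma$-compact subgroup (not just an $f$-invariant $\sigma$-compact set), so that we can apply ergodicity to it \emph{and} conclude from the complement having measure zero that the complement is empty. Both steps rely on $H$ being open, which is why we start from an open $\sigma$-compact subgroup and then generate, rather than just taking a union of compact sets translated by $f$.
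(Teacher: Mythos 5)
Your proof is correct and follows essentially the same route as the paper: start from a compact symmetric neighbourhood of the identity, generate an open $\sigma$-compact subgroup, pass to the $f$-invariant open $\sigma$-compact subgroup generated by its $f$-translates, and invoke ergodicity to conclude it is all of $G$. The only difference is that you spell out the last step (the complement is open, a Haar-null open set is empty), which the paper leaves implicit with \enquote{hence $H' = G$ by ergodicity}.
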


\begin{proof}
 Indeed, let $V$ be a symmetric compact neighbourhood of the identity. The subgroup $H = \langle V \rangle = \bigcup_n V^n$ is obviously open and $\sigma$-compact. Hence so is the subgroup $H'$ generated by the $f$-translates of $H$. But $H'$ is clearly $f$-invariant, hence $H' = G$ by ergodicity.
\end{proof}

\begin{lemma}
 To prove Theorem~\ref{MGALB:thm_halmos_aoki}, we may assume that the group is separable and metrisable (in particular, second countable).
\end{lemma}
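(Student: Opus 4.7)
The plan is to quotient $G$ by a suitable compact, $f$-invariant, normal subgroup $N$, so as to reduce to a second countable quotient on which the theorem is assumed to hold. Since $G$ is $\sigma$-compact by the previous lemma, the Kakutani--Kodaira theorem provides a compact normal subgroup $K$ of $G$ with $G/K$ second countable. Because $K$ need not be $f$-invariant, I would replace it by
\[ N = \bigcap_{n \in \mathbb{Z}} f^n(K), \]
which is compact (as a closed subset of $K$), normal in $G$, and by construction $f$-invariant.

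The heart of the argument is to verify that $G/N$ is still second countable. It is automatically $\sigma$-compact (continuous image of $G$) and totally disconnected (the quotient map is open and sends compact open subgroups to compact open subgroups), so it suffices to exhibit a countable neighbourhood base at the identity. For each $n \in \mathbb{Z}$, the bicontinuous isomorphism $G/K \to G/f^n(K)$ induced by $f^n$ transports a countable identity neighbourhood base of $G/K$ to a countable cofinal family $\mathcal{B}_n$ of open, $f^n(K)$-saturated neighbourhoods of $f^n(K)$ in $G$. The claim is then that finite intersections of elements of $\bigcup_n \mathcal{B}_n$ project to a countable neighbourhood base at the identity in $G/N$.

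To see this, fix an open set $W \subseteq G$ with $N \subseteq W$. A standard compactness argument (covering the compact set $f^{n_0}(K) \setminus W$ by the open sets $G \setminus f^n(K)$) yields a finite subset $F \subset \mathbb{Z}$ with $\bigcap_{n \in F} f^n(K) \subseteq W$. A short induction on $|F|$, repeatedly invoking the separation of disjoint compact subsets of a Hausdorff space by disjoint open neighbourhoods, then produces open sets $U_n \supseteq f^n(K)$ with $\bigcap_{n \in F} U_n \subseteq W$. Each $U_n$ can be shrunk (via the tube lemma applied to the continuous multiplication $G \times f^n(K) \to G$) to an $f^n(K)$-saturated open neighbourhood of $f^n(K)$ contained in $U_n$, and further to some $B_n \in \mathcal{B}_n$ by cofinality, so that $\bigcap_{n \in F} B_n \subseteq W$, as required.

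To conclude, the $f$-invariance of $N$ ensures that $f$ descends to a bicontinuous automorphism $\bar{f}$ of $G/N$; the compactness of $N$ makes $\pi\colon G \to G/N$ proper, so the pushforward of a Haar measure on $G$ is a Haar measure on $G/N$, and the ergodicity of $\bar{f}$ is inherited from that of $f$ since preimages of $\bar{f}$-almost invariant sets are $f$-almost invariant. Because $N$ is compact, $G$ is compact if and only if $G/N$ is, so any hypothetical non-compact tdlc counterexample to Theorem~\ref{MGALB:thm_halmos_aoki} produces a second countable one, justifying the reduction (separability and metrisability being equivalent to second countability for locally compact groups). The main technical hurdle is the second countability of $G/N$, handled by the compactness--separation argument sketched above.
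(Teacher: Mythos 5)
Your reduction follows the same skeleton as the paper's: invoke $\sigma$-compactness and Kakutani--Kodaira to get a compact normal $K$ with $G/K$ second countable, replace $K$ by the $f$-invariant compact normal subgroup $N=\bigcap_{n\in\mathbb{Z}}f^n(K)$ (the paper's $K_\infty$), descend $f$ to $G/N$, and use compactness of $N$ to transfer both ergodicity and (non-)compactness between $G$ and $G/N$. Where you genuinely diverge is the central technical point, the second countability of $G/N$. The paper argues structurally: each $G/K_n$ with $K_n=\bigcap_{|j|\le n}f^j(K)$ embeds diagonally into the finite product $\prod_{|j|\le n} G/f^j(K)$, hence is separable metrisable, and $G/K_\infty$ is the inverse limit of the $G/K_n$, hence again separable metrisable. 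You instead build a countable neighbourhood base at the identity of $G/N$ by hand: pull back countable bases through the isomorphisms $G/K\cong G/f^n(K)$ to get the saturated families $\mathcal{B}_n$, and show cofinality of finite intersections via the compactness argument $\bigcap_{n\in F}f^n(K)\subseteq W$ for a finite $F$, the induction separating compact sets $U_n\supseteq f^n(K)$ with $\bigcap_{n\in F}U_n\subseteq W$, and the tube-lemma saturation; then first countability plus $\sigma$-compactness (via Birkhoff--Kakutani) gives second countability. I checked these steps and they are sound. Your route is more elementary, avoiding inverse limits and the identification $G/K_\infty\cong\varprojlim G/K_n$ (which the paper leaves implicit and which itself needs compactness of $K$), at the cost of a longer point-set argument; the paper's version is shorter and makes the countability bookkeeping automatic by embedding into a countable product. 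Your closing remarks on pushing forward Haar measure through the proper map $\pi$, inheriting ergodicity, and the equivalence of compactness of $G$ and $G/N$ match the paper's (largely implicit) treatment.
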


\begin{proof}
 By the previous lemma, the group $G$ is $\sigma$-compact. Therefore, by the Kakutani-Kodaira theorem (see \cite[Theorem~8.7]{HR}), there exists a normal subgroup $K$ such that the quotient $G / K$ is separable and metrisable. Let $K_n = \bigcap_{-n}^n f^j (K)$. The quotient $G / K_n$ embeds into the product
\begin{equation*}
 \prod_{j = -n}^n G / f^j (K),
\end{equation*}
hence is also separable and metrisable. Therefore, so is $G' = G / K_\infty$ (where $K_\infty = \bigcap_{-\infty}^\infty f^j (K)$), which is the inverse limit of the $G / K_n$. Obviously, the automorphism $f$ descends to $G'$ and is still ergodic; moreover, $G$ is compact if and only if $G'$ is so (since $K_\infty$ is compact).
\end{proof}

\begin{lemma}
 If $f$ is an ergodic automorphism of a locally compact second countable group, then $f$ has a dense orbit.
\end{lemma}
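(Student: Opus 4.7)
The plan is a standard Baire-type argument exploiting the countable basis given by second countability, which converts ergodicity with respect to Haar measure into a topological transitivity statement. Since $G$ is second countable, I fix a countable basis $\{U_n\}_{n \geq 1}$ consisting of non-empty open sets. For each $n$, I consider
\[ A_n = \bigcup_{k \in \mathbb{Z}} f^k(U_n), \]
which is open, strictly $f$-invariant, and of strictly positive Haar measure, since Haar measure is positive on every non-empty open subset of a locally compact group and $\mu(A_n) \geq \mu(U_n) > 0$. Because $f^{-1}(A_n) = A_n$ holds on the nose, the ergodicity hypothesis applies to $A_n$ and forces $\mu(G \setminus A_n) = 0$.

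Setting $A = \bigcap_{n \geq 1} A_n$, a countable intersection of co-null sets remains co-null, so $A$ is in particular non-empty. Any $x \in A$ lies in every $A_n$, which by definition means that for each $n$ there exists $k_n \in \mathbb{Z}$ with $f^{-k_n}(x) \in U_n$. Hence the orbit $\{f^k(x) : k \in \mathbb{Z}\}$ meets every element of the basis $\{U_n\}$ and is therefore dense in $G$. No real obstacle arises; the only point requiring mild attention is that the ergodicity hypothesis is phrased in terms of $f^{-1}$-invariance up to null sets, but this is trivially satisfied here because each $A_n$ is strictly $f$-invariant.
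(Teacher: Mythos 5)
Your proof is correct and follows essentially the same argument as the paper: both apply ergodicity to the $f$-invariant open sets $\bigcup_k f^k(U_n)$ built from a countable basis and then use countable subadditivity of Haar measure. The only difference is presentational — the paper argues by contradiction (no dense orbit would make $G$ a countable union of null sets), whereas you directly exhibit a point with dense orbit in the co-null intersection $\bigcap_n A_n$.
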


\begin{proof}
Let $\{O_n\}$ be a countable basis of open sets. Suppose by contradiction that no point of $G$ has a dense orbit. Then for any $x \in G$, there is an integer $n_x$ such that $x$ does not belong to the orbit of $O_{n_x}$. The latter is an $f$-invariant set containing an open subset, hence its complement has measure zero, by ergodicity. Therefore $G$ could be written as the countable union of the $G \setminus \bigcup_j f^j (O_n)$, hence would have measure zero, which is absurd.
\end{proof}

\subsection{Tidy subgroups come into play} 
We will need the following result about tidy subgroups.

\begin{proposition} \label{MGALB:prop_willis_clopen}
Let $G$ be a totally disconnected locally compact group. If $U$ is a tidy subgroup for $\alpha \in \aut$, then the set $U^\star = \bigcup_{i \geqslant 0} \alpha^i (U)$ is (open and) closed.
\end{proposition}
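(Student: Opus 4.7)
Openness is immediate since $U^\star$ is a union of the open sets $\alpha^i(U)$. For closedness, I would argue using convergent nets. Suppose $x_\lambda \in U^\star$ with $x_\lambda \to x$; write $x_\lambda = \alpha^{n_\lambda}(u_\lambda)$ with $n_\lambda \geq 0$ and $u_\lambda \in U$.

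If the $n_\lambda$ remain bounded by some $N$, then the net eventually lies in the finite union $\bigcup_{i=0}^N \alpha^i(U)$ of compact (hence closed) sets, so $x \in U^\star$. I therefore focus on the case $n_\lambda \to \infty$, where both halves of tidiness are used. Tidy above gives a decomposition $u_\lambda = v_\lambda w_\lambda$ with $v_\lambda \in \up$ and $w_\lambda \in \um$. Setting $y_\lambda := \alpha^{n_\lambda}(v_\lambda) \in \alpha^{n_\lambda}(\up) \subseteq \upp$ and $z_\lambda := \alpha^{n_\lambda}(w_\lambda) \in \alpha^{n_\lambda}(\um)$, we obtain $x_\lambda = y_\lambda z_\lambda$; note that the inclusion $\alpha^n(\um) \subseteq \um$ for $n \geq 0$ follows immediately from the definition of $\um$, so in particular $z_\lambda \in \um$.

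After extracting a subnet via compactness of $\um$, I may assume $z_\lambda \to z_\infty$. The key point is that for every fixed $N \geq 0$ the inclusion $\alpha^n(\um) \subseteq \alpha^N(\um)$ holds whenever $n \geq N$ (by iterating $\alpha(\um) \subseteq \um$), and each $\alpha^N(\um)$ is closed; hence $z_\infty \in \bigcap_{N \geq 0} \alpha^N(\um) = \bigcap_{k \in \mathbb{Z}} \alpha^k(U)$, a set contained in $\alpha^m(\um)$ for every $m \geq 0$. On the other side, $y_\lambda = x_\lambda z_\lambda^{-1} \to x z_\infty^{-1}$, and since $\upp$ is closed by tidy below, $xz_\infty^{-1} \in \upp$; therefore $xz_\infty^{-1} \in \alpha^m(\up)$ for some $m \geq 0$. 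Recombining via tidy above, $x = (x z_\infty^{-1}) z_\infty \in \alpha^m(\up)\,\alpha^m(\um) = \alpha^m(\up \um) = \alpha^m(U) \subseteq U^\star$.

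I anticipate the main subtlety to be securing $z_\infty$ in $\alpha^m(\um)$ for the \emph{same} exponent $m$ produced on the $\upp$-side: plain compactness of $\um$ only yields $z_\infty \in \um$, which is not fine enough to recombine. It is the decreasing nesting $\alpha(\um) \subseteq \um$, combined with closedness of the images $\alpha^N(\um)$, that forces $z_\infty$ into the fully $\alpha$-invariant core $\bigcap_{k \in \mathbb{Z}} \alpha^k(U)$; this core lies in every $\alpha^m(\um)$ simultaneously and thus matches whatever $m$ the closedness of $\upp$ supplies.
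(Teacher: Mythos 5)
Your proof is correct, and it is worth noting that the paper does not actually prove this proposition: it simply defers to \cite[Proposition~1]{Wil94}, so your net argument is a genuine self-contained substitute. You use exactly the right three ingredients and combine them soundly: the factorisation $U=\up\um$ coming from tidiness above (Proposition~\ref{tachar}) to split $x_\lambda=y_\lambda z_\lambda$; compactness of $\um$ together with the nesting $\alpha(\um)\subseteq\um$ and closedness of each $\alpha^N(\um)$ (a homeomorphic image of the compact group $\um$) to force the limit $z_\infty$ into the $\alpha$-stable core $\bigcap_{N\geq 0}\alpha^N(\um)=\bigcap_{k\in\mathbb Z}\alpha^k(U)$; and closedness of $\upp$ from tidiness below to place $xz_\infty^{-1}$ in some $\alpha^m(\up)$, after which the core, lying in $\alpha^m(\um)$ for that same $m$, lets you recombine to get $x\in\alpha^m(U)\subseteq U^\star$. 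The subtlety you flag at the end is real: the naive identity $U^\star=\upp\um$ fails in general (for instance for $\alpha=\mathrm{diag}(p^{-1},p)$ on $\mathbb Q_p^2$ with $U=\mathbb Z_p^2$, where $\upp\um=\mathbb Q_p\times\mathbb Z_p$ strictly contains $U^\star$), so passing through the core to match the exponent $m$ is precisely what makes the recombination legitimate. One cosmetic point: the dichotomy \enquote{$n_\lambda$ bounded or $n_\lambda\to\infty$} is not exhaustive for nets; but if $n_\lambda\not\to\infty$ there is a cofinal subnet with bounded exponents, to which your first case applies, so this costs only a sentence.
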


\begin{proof}
 See \cite[Proposition~1]{Wil94}.
\end{proof}

We finally need the following easy lemma.

\begin{lemma} \label{MGALB:lemma_ergodic_clopen}
 Let $X$ be a locally compact non-discrete space and $f$ an automorphism with a dense orbit. If $A \subseteq X$ is a non-empty open and closed subset such that $f(A) \subseteq A$, then $A = X$.
\end{lemma}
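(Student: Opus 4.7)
The plan is to argue by contradiction: suppose $A \ne X$, so $B := X \setminus A$ is a non-empty clopen set, and let $x \in X$ be a point with dense orbit. Since $A$ is closed, if the orbit of $x$ were entirely contained in $A$ we would have $X = \overline{\{f^n(x) : n \in \mathbb Z\}} \subseteq A$, contradicting $A \ne X$; hence the orbit meets $B$, and by density and openness of $A$ it also meets $A$. Moreover, the possibility $f(A) = A$ can be dismissed immediately: in that case $B$ would be $f$-invariant as well, so the whole orbit of $x$ would lie in whichever of $A, B$ contains $x$, contradicting that it meets both. Therefore $f(A) \subsetneq A$, and $D := A \setminus f(A)$ is a non-empty clopen subset of $X$.

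The heart of the argument is to show that the orbit of $x$ meets $D$ in exactly one point. Since $f(A) \subseteq A$, the set $S := \{n \in \mathbb Z : f^n(x) \in A\}$ is upward-closed, and being a proper non-empty subset of $\mathbb Z$ it has a minimum $n_0$, the \enquote{entry time} of the orbit into $A$. The entry point $f^{n_0}(x)$ lies in $A$ but not in $f(A)$: otherwise injectivity of $f$ would give $f^{n_0 - 1}(x) \in A$, contradicting the minimality of $n_0$. Hence $f^{n_0}(x) \in D$. Conversely, orbit points $f^m(x)$ with $m > n_0$ lie in $f(A)$, and those with $m < n_0$ lie in $B$, so neither kind intersects $D$.

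To conclude, the orbit is dense in $X$, hence dense in the non-empty open set $D$; since $\{f^{n_0}(x)\}$ is the entire intersection of the orbit with $D$, this singleton is already dense in $D$. By the Hausdorff property, $D = \{f^{n_0}(x)\}$; but $D$ is open, so $f^{n_0}(x)$ is an isolated point of $X$, contradicting the non-discreteness of $X$ (understood, as is standard in the topological-group setting, as the absence of isolated points). The main subtlety is verifying that the entry time $n_0$ is well-defined, which follows at once from the two preliminary observations that the orbit meets both $A$ and $B$ and that $f(A) \subseteq A$ forces $S$ to be upward-closed.
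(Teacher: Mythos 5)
Your proof is correct, but it takes a genuinely different route from the paper's. The paper picks a point $x \in A$ with dense orbit and uses non-discreteness to produce a strictly monotone sequence $n_i$ with $f^{n_i}(x) \to x$; combining $f(A) \subseteq A$ (which keeps the forward orbit in $A$) with the closedness of $A$ (which captures the backward points $f^{-k}(x)$ as limits of the points $f^{n_i - k}(x) \in A$), it concludes that the whole dense orbit lies in the closed set $A$, hence $A = X$ directly. You instead argue by contradiction through the first entry time $n_0$ of the orbit into $A$, showing that the dense orbit meets the clopen set $D = A \setminus f(A)$ only in the point $f^{n_0}(x)$, which forces $D$ to be an open singleton and contradicts the absence of isolated points. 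What your argument buys is the avoidance of any recurrence or sequence-extraction step: it needs no countability or limit arguments, only that singletons are closed, whereas the paper's recurrence step is where its use of non-discreteness (and, implicitly, of sequences) is concentrated. Note that both proofs really use \enquote{non-discrete} in the strong sense of \enquote{no isolated points} --- you flag this explicitly, the paper uses it tacitly when asserting $f^{n_i}(x) \to x$. This reading is genuinely needed: for the shift on the two-point compactification $\mathbb{Z} \cup \{\pm\infty\}$, a non-discrete compact space with isolated points, the set $A = \{n \geq 0\} \cup \{+\infty\}$ is a proper non-empty clopen set with $f(A) \subseteq A$ while the orbit of $0$ is dense. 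Since the lemma is applied to a non-discrete topological group, where homogeneity rules out isolated points, your caveat is harmless in context and in fact a welcome clarification.
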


\begin{proof}
 Let $x$ be a point in $A$ with dense orbit, which exists because $A$ is open. By density of the orbit and non-discreteness of the space, there is a strictly monotone sequence of integers $n_i$ such that $x$ is the limit of $f^{n_i} (x)$. We may assume all the $n_i$ to be of the same sign, say positive (the negative case being proved similarly). Let $k \in \mathbb{N}$. As $f(A) \subseteq A$, $f^k (x) \in A$. Moreover, $f^{-k} (x)$ is the limit of the points $f^{n_i - k} (x)$, which are in $A$ for $i$ big enough. Hence $f^{-k} (x)$ is also in $A$, as the latter is closed. Therefore $A$ is a closed set containing the orbit of $x$, which is dense, hence $A = X$.
\end{proof}

\begin{proof}[Proof of Theorem~\ref{MGALB:thm_halmos_aoki}]
Let $U$ be a tidy subgroup for the automorphism $f$. By Proposition~\ref{MGALB:prop_willis_clopen}, $U^\star = \bigcup_{i \geqslant 0} f^i (U)$ is closed and open. By Lemma~\ref{MGALB:lemma_ergodic_clopen}, $U^\star = G$. Therefore, by compactness of $U$, $f^{-1} (U)$ must lie inside $U_m = \bigcup_{i \geqslant 0}^m f^i (U)$ for some $m$. Thus we have
\begin{equation*}
 U_m \subseteq U_{m + 1} = U \cup f(U_m) \subseteq f(U_m)
\end{equation*}
 and Lemma~\ref{MGALB:lemma_ergodic_clopen} again implies that $G = U_m$, hence $G$ is compact.
\end{proof}

\bibliographystyle{amsalpha}
\bibliography{tidy}

\end{document}